\DeclareSymbolFont{AMSb}{U}{msb}{m}{n}
\documentclass[11pt,a4paper,twoside,leqno,noamsfonts]{amsart}
\usepackage[utopia]{mathdesign}
\usepackage{microtype}

\usepackage[english]{babel}
\usepackage{amsmath, amsthm, amstext}
\usepackage[all,cmtip]{xy}
\usepackage[T1]{fontenc}
\usepackage{xcolor}
\usepackage{multicol}

\usepackage[all,cmtip]{xy}	
\xyoption{arrow}

\theoremstyle{plain}
\newtheorem{thm}{Theorem}[section]
\newtheorem*{thm*}{Theorem}
\newtheorem{lem}[thm]{Lemma}
\newtheorem{prp}[thm]{Proposition}
\newtheorem{cor}[thm]{Corollary}
\newtheorem*{cor*}{Corollary}

\theoremstyle{definition}
\newtheorem{defn}[thm]{Definition}

\newtheorem{rem}[thm]{Remark}
\newtheorem{exmp}[thm]{Example}

\newcommand{\bC}{\mathbb{C}}

\newcommand{\bP}{\mathbb{P}}

\newcommand{\bZ}{\mathbb{Z}}

\newcommand{\cD}{\mathcal{D}}

\newcommand{\cF}{\mathcal{F}}

\newcommand{\cO}{\mathcal{O}}

\newcommand{\sA}{\mathscr{A}}
\newcommand{\sB}{\mathscr{B}}
\newcommand{\sC}{\mathscr{C}}

\newcommand{\sE}{\mathscr{E}}
\newcommand{\sF}{\mathscr{F}}
\newcommand{\sG}{\mathscr{G}}

\newcommand{\sK}{\mathscr{K}}
\newcommand{\sL}{\mathscr{L}}
\newcommand{\sO}{\mathscr{O}}

\newcommand{\sV}{\mathscr{V}}
\newcommand{\sW}{\mathscr{W}}

\newcommand{\perf}{\operatorname{Perf}}
\newcommand{\coker}{\operatorname{Coker}}
\renewcommand{\ker}{\operatorname{Ker}}
\newcommand{\im}{\operatorname{Im}}
\newcommand{\ext}{\operatorname{Ext}}

\newcommand{\coh}{\operatorname{Coh}}
\newcommand{\aut}{\operatorname{Aut}}
\newcommand{\id}{{\operatorname{Id}}}
\newcommand{\tor}{{\operatorname{Tor}}}

\newcommand{\res}{{\operatorname{res}}}
\renewcommand{\inf}{{\operatorname{ind}}}
\newcommand{\triv}{{\operatorname{triv}}}

\title[Descent criterion for equivariant derived categories]{A descent criterion for equivalences between equivariant derived categories}

\author{Francesco Amodeo}
\email{francescoamodeo87@gmail.com}

\author{Riccardo Moschetti}
\address[R.M.]{ Dipartimento di Matematica ``F. Casorati'', Universit\`{a} degli Studi di Pavia, Via Ferrata 5, 27100 Pavia, Italy}
\email{riccardo.moschetti@gmail.com}

\author{Mattia Ornaghi}
\address[M.O.]{Erwin Schr\"odinger International Institute for Mathematics and Physics,
Boltzmanngasse 9, 1090 Wien, Austria}
\email{mattia12.ornaghi@gmail.com}

\keywords{Derived categories, Derived categories of equivariant sheaves, Derived category of quotient variety.}
\subjclass[2010]{13D09, 18E30, 14F43}

\begin{document}
\begin{abstract}
We investigate equivalences between the categories of perfects complexes of the quotients of two smooth projective variety by the action of a finite group.
As a result we give a necessary and sufficient condition for an equivalence between the equivariant derived categories to descend to the categories of perfect complexes.
\end{abstract}

\maketitle


\section{Introduction} 
Let $G$ be a finite group acting on a smooth projective variety $X$.\ We can consider the category $\coh^G(X)$ of $G$-equivariant coherent sheaves on $X$ and the corresponding bounded derived category, which we will denote by $\cD^G(X)$.\ 
When the action is free, the quotient $X/G$ is smooth and $\cD^G(X)$ turns out to be equivalent to the bounded derived category of coherent sheaves of the quotient $\cD(X/G)$.\ This is no longer true when the action is not free. However, $\cD^G(X)$ is still equivalent to the derived category of the stack quotient $[X/G]$. Notice that $[X/G]$ is smooth as a stack, so we can think of $\cD^G(X)$ as a replacement for the bounded derived category of $X/G$ when the action is not free. The autoequivalences of the aformentioned categories were studied in \cite{Ploog}. This is a way to understand the relations between $\cD(X/G)$ and $\cD^G(X)$ for a non-free action, as a categorical analogous of the comparison between the quotient $X/G$ and the stack quotient $[X/G]$. 

Another replacement for the category $\cD(X/G)$ when $X/G$ is singular is provided by the subcategory $\perf(X/G)$. It is called the category of perfect complexes and consists of the objects in $\cD(X/G)$ which are quasi-isomorphic to bounded complexes of locally free sheaves of finite rank on $X/G$. We refer to \cite{KPS} for more details about perfect complexes and equivariant derived categories. The aim of this paper is to compare $\cD^G(X)$ and $\perf(X/G)$ by studying when equivalences between equivariant derived categories descend to equivalences of the categories of perfect complexes.

In order to state our main result, consider another finite group $H$ acting on a smooth projective variety $Y$. Orlov's representability theorem for smooth stacks was proven by Kawamata, see \cite[Theorem 1.1]{Kaw}. As an outcome, we have that every fully faithful exact functor $F$ from $\cD^G(X)$ to $\cD^H(Y)$ is of Fourier-Mukai type, meaning that there exists an object $\sE^{\bullet}$ in $\cD^{G \times H}(X \times Y)$, such that $F$ is isomorphic to the functor $\Phi_{\sE^{\bullet}}$ given by $q_{Y,*}^G(q_X^*(-) \otimes \sE^\bullet)$, where $q_X$ and $q_Y$ are the projections to the two factors of $X \times Y$ and all the functors are properly derived when needed. The functor $q_{Y,*}^G$ is the equivariant pushforward described in Section \ref{Sec:dceq}. The functor $\Phi_{\sE^{\bullet}}$ is called Fourier-Mukai functor with kernel $\sE^{\bullet}$. When $\Phi_{\sE^{\bullet}}$ is an equivalence, an inverse is provided by its left adjoint $\Phi_{\sE_L^{\bullet}}$ (or equivalentely by its right adjoint).

In order to get an equivalence between $\perf(X/G)$ and $\perf(Y/H)$ starting from $\Phi_{\sE^{\bullet}}$, we will define two functors to compare equivariant and non equivariant settings. The projection $\pi:X\to X/G$ is compatible with the action of $G$ on $X$ and with the trivial action of $G$ on $X/G$. As a consequence, we get a functor $\triv: \coh(X/G) \to \coh^G(X/G)$ which equips a sheaf with the trivial action, and its adjoint $[\cdot]^G:\coh^G(X/G) \to \coh(X/G)$ which takes the $G$-equivariant part of a sheaf with a linearisation (see Section \ref{Sec:dceq} for the details). The derived pushforward of $\pi$ (resp. pullback) can be composed with the derived $[\cdot]^G$ (resp. $\triv$) to get a new functor $\Pi^G_*$ (resp. $\Pi^{G,*}$), as in the follwing diagram:

$$
\xymatrix{
\cD(X/G) \ar@<.5ex>[r]^-{\triv}   & \cD^G(X/G)\ar@<.5ex>[r]^-{\pi^*}\ar@<.5ex>[l]^-{[\cdot]^G} & \cD^G(X).\ar@<.5ex>[l]^-{\pi_{*}}\\
}
$$
This construction can be performed also on $Y$ with the action of $H$; we collect all of this together in the following diagram by considering also a Fourier-Mukai equivalence $\Phi_{\sE^{\bullet}}$:
$$
\xymatrix{
\cD^G(X)\ar[r]^{\Phi_{\sE^{\bullet}}} & \cD^H(Y)\ar[d]^{\Pi^{H}_*}\\
\cD(X/G)\ar@<.5ex>[u]^{\Pi^{G,*}} & \cD(Y/H)\\
\perf(X/G)\ar@{^{(}->}[u] & \perf(Y/H)\ar@{^{(}->}[u]
}
$$

We call $\Omega$ the functor obtained by the composition $\Pi^{H}_{*} \circ \Phi_{\sE^{\bullet}} \circ \Pi^{G,*}$ restricted to $\perf(X/G)$. This is a functor from $\perf(X/G)$ to $\cD(Y/H)$. The aim of this paper is to understand if an equivalence $\Phi_{\sE^\bullet}$ induces an equivalence between $\perf(X/G)$ and $\perf(Y/H)$ through $\Omega$, meaning that $\Omega$ has image in the category $\perf(Y/H)$ and is an equivalence. The key is a property called \textit{descent}: we say a $G$-equivariant complex of sheaves $\sF^\bullet$ descends to $\perf(X/G)$ if there exists a complex $\sV^\bullet$ in $\perf(X/G)$ such that $\Pi^{G,*}(\sV^\bullet)$ is isomorphic to $\sF^\bullet$. We make the role of this property explicit in Theorem \ref{teoremadellasvolta}:

\begin{thm*} 
The functor{\ } $\Omega$ is an equivalence between $\perf(X/G)$ and $\perf(Y/H)$ if and only if the two following conditions are satisfied.
\begin{itemize}
\item[(i)] The object $\Phi_{\sE^\bullet} \circ \Pi^{G,*}(\sF^{\bullet})$ descends to $\perf(Y/H)$, for all $\sF^{\bullet} \in \perf(X/G)$.
\item[(ii)] The object $(\Phi_{\sE^\bullet})^{-1} \circ \Pi^{H,*}(\sV^{\bullet})$ descends to $\perf(X/G)$, for all $\sV^{\bullet}\in \perf(Y/H)$.
\end{itemize}
\end{thm*}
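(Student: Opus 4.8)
The plan is to reinterpret the word \emph{descent} as membership in an essential image, and then to recognise $\Omega$ as a composite of three functors two of which are automatically equivalences.

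First I would record that $\Pi^{G,*}\dashv\Pi^{G}_{*}$ (compose $\triv\dashv[\cdot]^{G}$ with $\pi^{*}\dashv\pi_{*}$), and likewise for $H$. The crucial point is that $\Pi^{G}_{*}\circ\Pi^{G,*}\cong\id$ on $\perf(X/G)$: by the projection formula $\pi_{*}\pi^{*}\sV^{\bullet}\cong\sV^{\bullet}\otimes\pi_{*}\cO_{X}$ with its natural linearisation, and $[\pi_{*}\cO_{X}]^{G}\cong\cO_{X/G}$, so taking invariants returns $\sV^{\bullet}$. Thus the unit $\id\to\Pi^{G}_{*}\Pi^{G,*}$ is an isomorphism, $\Pi^{G,*}$ is fully faithful on perfect complexes, its essential image $\cP_{G}:=\Pi^{G,*}(\perf(X/G))$ is a triangulated subcategory of $\cD^{G}(X)$, and $\Pi^{G}_{*}|_{\cP_{G}}$ is a quasi-inverse; the same holds for $H$. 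By definition an object descends to $\perf(X/G)$ exactly when it lies in $\cP_{G}$, and since $\Pi^{G,*}$ is fully faithful this happens iff the counit $\Pi^{G,*}\Pi^{G}_{*}\to\id$ is an isomorphism on it. Writing $\Phi:=\Phi_{\sE^{\bullet}}$ and $\cQ:=\Phi(\cP_{G})$, condition (i) reads $\cQ\subseteq\cP_{H}$ and condition (ii) reads $\Phi^{-1}(\cP_{H})\subseteq\cP_{G}$, i.e.\ $\cP_{H}\subseteq\cQ$; so (i) and (ii) together say precisely that $\Phi$ restricts to an equivalence $\cP_{G}\xrightarrow{\sim}\cP_{H}$.

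For the backward direction I would assume (i) and (ii) and factor $\Omega$ as $\perf(X/G)\xrightarrow{\Pi^{G,*}}\cP_{G}\xrightarrow{\Phi}\cP_{H}\xrightarrow{\Pi^{H}_{*}}\perf(Y/H)$. The outer arrows are equivalences by the first paragraph and the middle one by the reformulation above, while (i) is exactly what guarantees that $\Omega$ lands in $\perf(Y/H)$; hence $\Omega$ is an equivalence.

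For the forward direction I would assume $\Omega$ is an equivalence and aim to prove $\cQ=\cP_{H}$. Fix $\sF^{\bullet}\in\perf(X/G)$ and set $\sG^{\bullet}=\Phi\Pi^{G,*}\sF^{\bullet}\in\cQ$; since $\Omega$ lands in $\perf(Y/H)$ one has $\Pi^{H}_{*}\sG^{\bullet}\in\perf(Y/H)$, and the triangle identity shows that $\Pi^{H}_{*}$ applied to the counit $\varepsilon\colon\Pi^{H,*}\Pi^{H}_{*}\sG^{\bullet}\to\sG^{\bullet}$ is an isomorphism. Transporting the full faithfulness of $\Omega$ through the equivalences $\Pi^{G,*}$ and $\Phi$ and through the fully faithful $\Pi^{H,*}$, and using that $\cP_{H}$ is coreflective with coreflector $\Pi^{H,*}\Pi^{H}_{*}$, one checks that precomposition with $\varepsilon$ gives isomorphisms $\operatorname{Hom}(\sG^{\bullet},\sH^{\bullet})\xrightarrow{\sim}\operatorname{Hom}(\Pi^{H,*}\Pi^{H}_{*}\sG^{\bullet},\sH^{\bullet})$ for every $\sH^{\bullet}\in\cQ$ and every shift, so the cone $K$ of $\varepsilon$ admits no nonzero map to $\cQ$. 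The main obstacle is to upgrade this---together with $\Pi^{H}_{*}\varepsilon$ being an isomorphism---to $K=0$, i.e.\ to rule out a nonzero object annihilated by $\Pi^{H}_{*}$; here I would invoke that for a finite group $\Pi^{H,*}$ also admits $\Pi^{H}_{*}$ as a \emph{left} adjoint (the ambidextrous adjunction available for finite groups) to lift $\id_{\sG^{\bullet}}$ to a section of $\varepsilon$, making the defining triangle of $K$ split; then $K$ is a direct summand of $\sG^{\bullet}$, hence lies in $\cQ$ by idempotent completeness of $\perf$, and $\operatorname{Hom}(K,K)=0$ forces $K=0$. This proves $\cQ\subseteq\cP_{H}$, which is (i); running the identical argument with $\Phi^{-1}$ in place of $\Phi$ and using the essential surjectivity of $\Omega$ yields $\cP_{H}\subseteq\cQ$, which is (ii).
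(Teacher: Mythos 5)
Your ``if'' direction is correct and is essentially the paper's argument: rewriting (i) and (ii) as $\cQ\subseteq\cP_{H}$ and $\cP_{H}\subseteq\cQ$ and factoring $\Omega$ as $\Pi^{H}_{*}\circ\Phi\circ\Pi^{G,*}$ through three equivalences is a cleaner packaging of the paper's ``composition of fully faithful functors'' step plus its essential-surjectivity step.

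The ``only if'' direction has a genuine gap at the point where you split the counit. The unit $\eta'\colon\id\to\Pi^{H,*}\Pi^{H}_{*}$ of the second (left) adjunction does not give a section of $\varepsilon\colon\Pi^{H,*}\Pi^{H}_{*}\sG^{\bullet}\to\sG^{\bullet}$: the composite $\varepsilon\circ\eta'$ is the averaging (trace) endomorphism, not the identity. Nothing in that step uses the hypothesis that $\Omega$ is an equivalence, so if it worked it would show that $\varepsilon$ splits for \emph{every} object of $\cD^{H}(Y)$; but for $y$ a fixed point of the action and $\sG=\cO_{y}\otimes\chi$ with $\chi$ a nontrivial character of $H$ one has $\Pi^{H}_{*}\sG=0$, so $\varepsilon$ is the map $0\to\sG$ and admits no section. (There is also a smaller slip: a section of $\varepsilon$ would exhibit $K[-1]$ as a direct summand of $\Pi^{H,*}\Pi^{H}_{*}\sG^{\bullet}$, not $K$ as a summand of $\sG^{\bullet}$.) What you do correctly extract from the equivalence hypothesis --- that $\Pi^{H}_{*}K=0$ and $\operatorname{Hom}(K[n],\cQ)=0$ for all $n$ --- does not by itself force $K=0$, and closing that implication is exactly the missing idea. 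The paper avoids the cone altogether: it takes $\Omega(\sF^{\bullet})\in\perf(Y/H)$ directly as the descending witness for $\Phi\Pi^{G,*}(\sF^{\bullet})$, and obtains (ii) by running the same observation for the inverse $\Omega^{-1}=\Pi^{G}_{*}\circ\Phi^{-1}\circ\Pi^{H,*}$ restricted to $\perf(Y/H)$; you should either follow that route or find an argument that genuinely uses the equivalence of $\Omega$ to kill $K$.
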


We then find an explicit condition which is equivalent to the descent property required by the previous theorem. This is done in Theorem \ref{PG}: 

\begin{thm*} 
An object $\sF^\bullet$ in $\cD^G(X)$ descends to $\perf(X/G)$ if and only if the following condition holds.

\begin{itemize}
\item[{$(\star)$}] The stabiliser $G_x$ acts trivially on the $\sO_X$-modules $H^j(\sF^\bullet \otimes k(x))$, for all $x \in X, j \in \bZ$.
\end{itemize}
\end{thm*}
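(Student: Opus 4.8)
My plan is to treat the two implications separately, the ``only if'' direction being a direct local computation and the converse being the substantial part.

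For the ``only if'' direction, suppose $\sF^\bullet\cong\Pi^{G,*}(\sV^\bullet)=\pi^*(\triv\,\sV^\bullet)$ for some $\sV^\bullet$. Fix $x\in X$ with image $\bar x=\pi(x)$ and set $A=\sO_{X,x}$, $B=\sO_{X/G,\bar x}$. Since $\pi$ is $G$-invariant, the induced map $B\to A$ is $G_x$-equivariant for the trivial action on $B$, so base change along $B\to A$ yields a $G_x$-equivariant isomorphism $\sF^\bullet\otimes^{\mathbf L}k(x)\cong\sV^\bullet\otimes^{\mathbf L}k(\bar x)$. The linearisation of $\sV^\bullet$ is trivial and $G_x$ fixes $x$, hence acts trivially on the residue field $k(x)$; therefore $G_x$ acts trivially on every $H^j$, which is exactly $(\star)$.

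For the converse I take as descent candidate $\sV^\bullet:=\Pi^G_*(\sF^\bullet)$. Because $|G|$ is invertible, $(\pi_*\sO_X)^G=\sO_{X/G}$, and together with the projection formula this gives $\Pi^G_*\Pi^{G,*}\cong\id$, so $\Pi^{G,*}$ is fully faithful and left adjoint to $\Pi^G_*$. Consequently ``$\sF^\bullet$ descends to $\perf(X/G)$'' is equivalent to the conjunction of (a) the counit $\Pi^{G,*}\Pi^G_*\sF^\bullet\to\sF^\bullet$ being an isomorphism and (b) $\sV^\bullet$ being perfect. Both assertions are local on $X/G$ and may be checked after completing at a point $\bar x$. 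Using the orbit decomposition of $\pi^{-1}(\bar x)$ I reduce to the stabiliser: completion identifies the situation with $G_x$ acting on $\widehat A=\widehat{\sO}_{X,x}$ and $\widehat B=\widehat A^{G_x}$, so that $\cD^G(X)$ completed at the orbit becomes $\cD^{G_x}(\widehat A)$; and since $x$ is a smooth fixed point, in characteristic zero the action linearises, giving $\widehat A\cong k[[V]]$ with $G_x$ acting linearly on $V$.

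The heart of the argument is then a minimal-model computation over $\widehat A$. Let $M^\bullet$ be the complex corresponding to $\sF^\bullet$ and choose a $G_x$-equivariant minimal free resolution $P^\bullet\xrightarrow{\ \sim\ }M^\bullet$, which exists by averaging. By minimality the terms satisfy $P^j\cong\widehat A\otimes_k H^j(M^\bullet\otimes^{\mathbf L}k(x))$ as $G_x$-modules, with $G_x$ acting on the second factor through the fibre action. Condition $(\star)$ says this fibre action is trivial, and a direct check then shows that $G_x$-equivariance of the differentials forces all their matrix entries to lie in $\widehat A^{G_x}=\widehat B$. Hence $P^\bullet=\widehat A\otimes_{\widehat B}Q^\bullet$ for a bounded complex $Q^\bullet$ of finite free $\widehat B$-modules, so $M^\bullet\simeq\Pi^{G,*}Q^\bullet$ and $(M^\bullet)^{G_x}\simeq Q^\bullet$ is perfect, the counit being realised by the resulting quasi-isomorphism; globalising these identifications shows $\sV^\bullet$ is perfect and the counit is an isomorphism, so $\sF^\bullet\cong\Pi^{G,*}(\sV^\bullet)$. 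I expect the main obstacle to be precisely this local reduction together with the equivariant minimal model: linearising both the action and the equivariant free modules so as to write $P^j\cong\widehat A\otimes_k H^j(\,\cdot\,)$, and then extracting from $(\star)$ that the differentials descend to $\widehat B$. This is where the non-flatness of $\pi$ along the fixed locus is absorbed, and it is the step that uses $|G|$ invertible in an essential way.
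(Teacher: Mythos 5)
Your argument is correct in outline, but it takes a genuinely different route from the paper's. For the ``only if'' direction the paper invokes Proposition \ref{3.2.9} to produce a finite locally free $G$-resolution with trivial stabiliser action on fibres and then passes to cohomology; your direct base-change computation ($\sF^\bullet\otimes^{\mathbf L}k(x)\cong \sV^\bullet\otimes^{\mathbf L}_{\sO_{X/G,\bar x}}k(\bar x)$ with trivial $G_x$-action because the linearisation is pulled back from the quotient) is a cleaner way to the same conclusion. For the converse the paper stays global: it uses Lemma \ref{3.2.10} to build, step by step, surjections from locally free sheaves that descend, controls the kernels via the Tor long exact sequence and Lemma \ref{3.2.11}, and then handles complexes by induction on the number of nonzero cohomologies using the replacement Lemma \ref{3.2.12}, stupid truncations and the five lemma. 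You instead complete at a point, reduce via the orbit/induction equivalence to $G_x$ acting on $\widehat A=\widehat{\sO}_{X,x}$ with $\widehat B=\widehat A^{G_x}$, and take a $G_x$-equivariant minimal free complex $P^\bullet$; minimality gives $P^j\cong\widehat A\otimes_k H^j(\sF^\bullet\otimes^{\mathbf L}k(x))$, and your key observation --- that when the fibre representations are trivial, equivariance of the differentials forces their entries into $\widehat A^{G_x}$ --- is correct and makes it transparent why $(\star)$ is exactly the right condition. What your approach buys is this conceptual clarity and the avoidance of the somewhat intricate replacement lemma; what it costs is the supporting machinery you currently only gesture at: that perfectness and the counit being an isomorphism can be checked after completion and that $\Pi^{G,*}$, $\Pi^G_*$ commute with it, the existence of bounded equivariant minimal models over $\widehat A$ (boundedness uses regularity, equivariance uses $|G|$ invertible), and the equivariant splitting $P^j\cong\widehat A\otimes_k(P^j\otimes k)$. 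Each of these is standard but would need to be spelled out for the proof to be complete.
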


Combining the two results, meaning checking $(\star)$ for every complex of Conditions (i) and (ii) gives a condition which is difficult to study, since it has to be verified on every perfect complex involved. We point out that the problem of checking if $\Omega$ actually defines an equivalence $\perf(X/G) \to \perf(Y/H)$ is not trivial, even when we restrict to autoequivalences. In order to clarify this, we give an explicit example at the end of Section \ref{sec:statementmainfunctors}. Such an example suggests that the descent criterion of Theorem \ref{teoremadellasvolta} may only depend on the kernel $\sE^\bullet$ of the Fourier-Mukai equivalence we are considering. In this perspective, we exhibit in Corollary \ref{cornec} a necessary condition depending only on $\sE^\bullet$:

\begin{cor*}
The functor $\Omega$ is an equivalence between $\perf(X/G)$ and $\perf(Y/H)$ only if the stabiliser $H_y$ acts trivially on $\left[H^i\bigl(\sE^\bullet|_{X \times \{y\}} \bigr)\right]^G$ and the stabiliser $G_x$ acts trivially on $\left[H^i\bigl(\sE_L^\bullet|_{\{x\} \times Y} \bigr)\right]^H$ for every integer $i$, $x$ in $X$ and $y$ in $Y$.
\end{cor*}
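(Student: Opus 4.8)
The plan is to deduce the two stated conditions as special instances of the descent requirements in Theorem \ref{teoremadellasvolta}, read through the pointwise criterion of Theorem \ref{PG}. Assume $\Omega$ is an equivalence. Then Conditions (i) and (ii) hold for \emph{every} perfect complex on $X/G$ and on $Y/H$ respectively; in particular they hold for the simplest possible probes, namely the structure sheaves $\sO_{X/G}$ and $\sO_{Y/H}$, which are perfect because they are locally free of rank one. The idea is that feeding these structure sheaves into the functors collapses $\Pi^{G,*}$ and $\Pi^{H,*}$ to the operation $\pi^*\circ\triv$ applied to $\sO$, and turns the abstract descent condition into an explicit statement about the Fourier--Mukai kernel.

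First I would record that $\Pi^{G,*}(\sO_{X/G})=\pi^*(\triv\,\sO_{X/G})=\sO_X$, equipped with its canonical $G$-linearisation (the action downstairs being trivial and $\pi$ being $G$-equivariant). Hence $\Phi_{\sE^\bullet}\circ\Pi^{G,*}(\sO_{X/G})=\Phi_{\sE^\bullet}(\sO_X)=q^G_{Y,*}(\sE^\bullet)$, since $q_X^*\sO_X=\sO_{X\times Y}$. By Condition (i) this object descends to $\perf(Y/H)$, so Theorem \ref{PG} applies and tells us that $H_y$ acts trivially on $H^j\bigl(q^G_{Y,*}(\sE^\bullet)\otimes k(y)\bigr)$ for every $y\in Y$ and every $j$. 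It then remains to identify this fibre. Here I would use derived base change along the inclusion of the fibre $X\times\{y\}\cong X$ of the proper projection $q_Y$, together with the fact that taking $G$-invariants is exact (as $|G|$ is invertible) and $\sO_Y$-linear, hence commutes both with $\otimes\,k(y)$ and with cohomology. This yields a natural $H_y$-equivariant isomorphism $H^j\bigl(q^G_{Y,*}(\sE^\bullet)\otimes k(y)\bigr)\cong\bigl[H^j\bigl(\sE^\bullet|_{X\times\{y\}}\bigr)\bigr]^G$, where the right-hand side is the (hyper)cohomology of the restricted complex over the slice, after passing to $G$-invariants. Combining the two displayed facts gives exactly the first condition of the statement. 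Running the identical argument for the inverse equivalence $\Phi_{\sE_L^\bullet}=(\Phi_{\sE^\bullet})^{-1}$, applied to $\sO_{Y/H}$ through Condition (ii) and restricting the kernel $\sE_L^\bullet$ to the slices $\{x\}\times Y$, produces the second condition by the same base-change computation with the roles of the two factors exchanged.

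The main obstacle is the equivariant bookkeeping in the base-change step: one must check that the three structures in play — the derived fibre at $y$, the $G$-invariants functor, and the residual $H_y$-action coming from the stabiliser that preserves the slice — are mutually compatible, so that the isomorphism above is genuinely $H_y$-equivariant and not merely an isomorphism of underlying complexes. The subgroup of $G\times H$ fixing $X\times\{y\}$ is precisely $G\times H_y$, acting on the slice through $G$ on $X$ and through $H_y$ on the linearisation alone; verifying that base change is natural for this action, and that properness of $q_Y$ legitimises it, is the crux. Everything else is formal: because the corollary asserts only necessity there is no converse to prove, and it suffices to exhibit the two conditions as consequences of descent for the single pair of probes $\sO_{X/G}$ and $\sO_{Y/H}$.
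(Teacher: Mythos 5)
Your argument is correct and is essentially the paper's: the paper proves this corollary in one line by specialising Lemma \ref{PG2} to $\sA^\bullet=\sO_{X/G}$ and $\sB^\bullet=\sO_{Y/H}$, and your base-change computation identifying $H^j\bigl(q^G_{Y,*}(\sE^\bullet)\otimes k(y)\bigr)$ with $\bigl[H^j\bigl(\sE^\bullet|_{X\times\{y\}}\bigr)\bigr]^G$ is precisely the content of that lemma, inlined for the structure-sheaf probe. The only difference is organisational, not mathematical.
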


Unfortunately the corollary is only a necessary condition. Our main result is Theorem \ref{thm:final2} which gives conditions which has to be checked only on a generator of $\perf(X/G)$ and a generator of $\perf(Y/H)$.

\begin{thm*} 
Consider a generator $\sA^\bullet$ of $\perf(X/G)$ and a generator $\sB^\bullet$ of $\perf(Y/H)$. The functor { }$\Omega$ is an equivalence between $\perf(X/G)$ and $\perf(Y/H)$ if and only if the two following conditions are satisfied for every integer $i$, $y$ in $Y$, $x$ in $X$.
\begin{itemize}
\item[(i')] The stabiliser $H_y$ acts trivially on $\left[H^i\bigl(\Pi^{*,G} (\sA^\bullet) \otimes \sE^\bullet|_{X \times \{y\}} \bigr)\right]^G$.

\item[(ii')] The stabiliser $G_x$ acts trivially on $\left[H^i\bigl(\Pi^{*,H} (\sB^\bullet) \otimes \sE_L^\bullet|_{\{x\} \times Y} \bigr)\right]^H$.

\end{itemize}
\end{thm*}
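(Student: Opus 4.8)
The plan is to combine the two criteria already at our disposal with a thick-subcategory reduction. By Theorem \ref{teoremadellasvolta}, $\Omega$ is an equivalence if and only if conditions (i) and (ii) of that statement hold. These two conditions are symmetric: condition (ii) is obtained from condition (i) by exchanging the roles of $(X,G,\sA^\bullet,\sE^\bullet)$ and $(Y,H,\sB^\bullet,\sE_L^\bullet)$ and using that $\Phi_{\sE_L^\bullet}$ is a quasi-inverse of $\Phi_{\sE^\bullet}$. I will therefore only describe the treatment of (i), which I claim is equivalent to (i'); condition (ii) becomes (ii') by the same argument.

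First I would show that
\[
\cT := \{\, \sF^\bullet \in \perf(X/G) \ :\ \Phi_{\sE^\bullet}\circ\Pi^{G,*}(\sF^\bullet) \text{ descends to } \perf(Y/H)\,\}
\]
is a thick subcategory of $\perf(X/G)$. Since $\Pi^{G,*}$ and $\Phi_{\sE^\bullet}$ are exact, their composite is an exact functor $\perf(X/G)\to\cD^H(Y)$, and the preimage of a thick subcategory under an exact functor is again thick; so it suffices to prove that the objects of $\cD^H(Y)$ satisfying condition $(\star)$ of Theorem \ref{PG} form a thick subcategory. Stability under shifts and direct summands is immediate, because $H^j\bigl((-)\otimes k(y)\bigr)$ commutes with shifts and sends summands to summands. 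Stability under cones is the main point: for a triangle the fibre cohomologies $H^j\bigl((-)\otimes k(y)\bigr)$ sit in an $H_y$-equivariant long exact sequence, so the cohomology of the cone is an extension of a subquotient of $H^j\bigl((-)\otimes k(y)\bigr)$ of one term by a subquotient of the other; subquotients of trivial $H_y$-modules are trivial, and because the order of $H_y$ is invertible in the ground field its representations are completely reducible, so this extension is itself trivial. This is the step where the hypotheses on the field are essential, and I expect it to be the heart of the argument.

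With thickness in hand the reduction to the generator is formal. As $\cT$ is thick and $\sA^\bullet$ generates $\perf(X/G)$, we have $\cT=\perf(X/G)$ if and only if $\sA^\bullet\in\cT$. Thus condition (i), which asks that every $\sF^\bullet\in\perf(X/G)$ belong to $\cT$, is equivalent to the single requirement that $\Phi_{\sE^\bullet}\circ\Pi^{G,*}(\sA^\bullet)$ descend to $\perf(Y/H)$.

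It remains to rewrite this requirement as condition (i'). By Theorem \ref{PG} this descent is equivalent to the triviality of the $H_y$-action on $H^i\bigl(\Phi_{\sE^\bullet}\circ\Pi^{G,*}(\sA^\bullet)\otimes k(y)\bigr)$ for all $i$ and all $y\in Y$. I would identify this group with $\bigl[H^i\bigl(\Pi^{G,*}(\sA^\bullet)\otimes\sE^\bullet|_{X\times\{y\}}\bigr)\bigr]^G$ as an $H_y$-representation, by the base-change computation already used for Corollary \ref{cornec}: for the cartesian square $X\times\{y\}\hookrightarrow X\times Y$ lying over $\{y\}\hookrightarrow Y$, properness of $q_Y$ gives an isomorphism $q_{Y,*}^G\bigl(q_X^*(-)\otimes\sE^\bullet\bigr)\otimes k(y)\cong\bigl[R\Gamma\bigl(X,(-)\otimes\sE^\bullet|_{X\times\{y\}}\bigr)\bigr]^G$, and exactness of $[\,\cdot\,]^G$ lets it commute with $H^i$; the $H_y$-linearisation on the right is the restriction of the $G\times H$-linearisation of $\sE^\bullet$ to $G\times\{e\}$ over the fixed point $y$. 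This yields condition (i') exactly, and the symmetric computation with kernel $\sE_L^\bullet$ on $\{x\}\times Y$ turns condition (ii) into (ii'). Hence $\Omega$ is an equivalence if and only if (i') and (ii') hold. The only genuine difficulty in this last step is the bookkeeping of the equivariant structures under base change, rather than any substantive obstruction.
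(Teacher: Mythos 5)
Your proposal is correct and follows essentially the same route as the paper: the paper first proves the all-objects version (Lemma \ref{PG2}) via the same base-change computation you describe, and then reduces to a generator by checking stability of (i') under shifts and cones using Lemma \ref{3.2.11}, exactly your thick-subcategory argument in a different packaging. The only (harmless) differences are that you verify closure under direct summands explicitly, which the paper leaves implicit, and you invoke complete reducibility where the paper uses the elementary divisibility argument of Lemma \ref{3.2.11}; both hinge on the same characteristic-zero hypothesis.
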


\subsection*{Plan of the paper.}
The paper is organised as follows: in Section \ref{sec:background} we give some basic notions on derived categories and equivariant sheaves. The functor $\Omega$ is central in this paper and it is defined in Section \ref{sec:statementmainfunctors}, where we also prove Theorem \ref{teoremadellasvolta}. At the end of Section \ref{sec:statementmainfunctors} there are examples describing the possible behaviours of $\Omega$. The problem of finding when $\Omega$ is an equivalence is related to the descent data of sheaves: we give an explicit condition for this in Theorem \ref{PG}. Finally, Section \ref{sec:applications} we apply all the previous result to get the more explicit criterion Theorem \ref{thm:final2}.

\subsection*{Conventions}
We will consider functors properly derived where needed. We will use $\sF$ without decorations to identify a sheaf and $\sG^\bullet$ to identify a complex of sheaves. Throughout the paper, we work over a base field $k$ which is algebraically closed of characteristic zero. Even if some concept can be stated also more generally, we will always work with finite groups.

\section{Preliminaries} \label{sec:background}

\subsection{Group actions}

\begin{defn} \label{defn:groupaction}
A \textit{(left) action} of $G$ on a variety $X$ is given by a group homomorphism from $G$ to $\aut(X)$. We will call this map with the same name $g$.
\end{defn}

We refer to the book \cite{isaacs} for all the details on actions by finite groups; we will report here only the details that we need in this paper.

\begin{defn}
Let $U$ be a subset of $X$. The \textit{orbit} of $U$ is the subset of $X$ defined as
$$G(U):=\left\{g(x)\in X\mbox{ s.t. } x \in U, g \in G\right\}.$$
The \textit{stabiliser} $G_x$ of the point $x \in X$ is the subset of $G$ defined as
$$G_x:=\left\{g\in G\mbox{ s.t. } g(x)=x\right\}.$$
\end{defn}

The action of a group is called \textit{free}, if every point has a trivial stabiliser.

\begin{exmp} \label{exa:action1}
Consider the action of the group $\bZ_2$ on $\bP^2(x_0:x_1:x_2)$ given by
\begin{equation*}
{g}(x_0:x_1:x_2):=
\begin{cases}
(x_0:x_1:x_2) &\mbox{if } g=0,\\
(x_0:x_1:-x_2) &\mbox{if } g=1.
\end{cases}
\end{equation*}
It is straightforward to check that the orbits of points are given by 
\begin{equation*}
{\mathbb{Z}_2}(x_0:x_1:x_2):=
\begin{cases}
(x_0:x_1:x_2), (x_0:x_1:-x_2) &\mbox{if $x_2\neq 0$},\\
(x_0:x_1:0) &\mbox{if $x_2=0$}.
\end{cases}
\end{equation*}
Moreover, the stabilisers of the point $(x_0:x_1:0)$ and $(0:0:1)$ is the whole $\bZ_2$. It is the identity group for every other point. Finally, the fixed locus for the action of $\bZ_2$ is given by
$$\big\{[z_0:z_1:z_2] \in \bP^2 \text{ such that } z_2=0 \big\} \cup \big\{[0:0:1]\big\}.$$
\end{exmp}

Once we have an action of $G$ on $X$ as described before, we can certainly consider the topological quotient $X/G$, where the points are actually $G$-orbits. We want to understand when $X/G$ can be endowed with a scheme structure induced by $X$ via the projection map.

\begin{defn}
The \textit{geometric quotient} of $X$ by $G$ is a pair ($X/G,\pi$), where $X/G$ is a projective variety and $\pi$ is a map $X\to X/G$, such that:
\begin{itemize}
\item[1.] The set $\pi^{-1}(y)$ coincides with the orbit $G(\{y\})$ for every $y\in X/G$;
\item[2.] The subset $U$ is open in $X/G$ if and only if $\pi^{-1}(U)$ is open in $X$;
\item[3.] The structure sheaf $\cO_{X/G}$ is isomorphic to $\pi_*(\cO_X^G)$, where $\cO_X^G$ denotes the $G$-linearised structure sheaf (cfr. Section \ref{Sec:dceq}).
\end{itemize}
\end{defn}

The geometric quotient ($X/G,\pi$) is a \textit{categorical quotient}; it means that the following conditions are satisfied:
\begin{itemize}
\item[1.] The map $\pi$ is $G$-invariant, i.e. for every $g\in G$ we have $\pi\circ g=\pi$.
\item[2.] The map $\pi$ is universal, i.e. for every pair ($Z,\pi'$), where $Z$ is a projective variety and $\pi':X\to Z$ is $G$-invariant, there exists a unique map $h:X/G\to Z$ such that the following diagram commutes:
$$\xymatrix{
X\ar[r]^{\pi}\ar[dr]_{\pi'}&X/G\ar@{-->}[d]^{\exists! h}\\
&Z.
}$$
\end{itemize}
When the group $G$ is finite, a geometric quotient always exists. In the notation above, it is given by the topological quotient $\pi: X \to X/G$. When $G$ is infinite, the situation is more complicated and a possible solution comes from the Geometric Invariant Theory.
Starting from a smooth variety $X$, it is easy to prove that the quotient $X/G$ is smooth if and only if the action of $G$ is free.

\begin{exmp} \label{exa:action2}
Let's go back on Example \ref{exa:action1} where the action of $\bZ_2$ on $\bP^2$ is not free. The quotient is isomorphic to the weigthed projective plane $\bP^2(1:1:2)$. This is isomorphic to the quadric cone defined in $\bP^3$ by
$$\left\{[w_0:w_1:w_2:w_3] \in \bP^3 \text{ such that } w_0w_1=w_2^2\right\}.$$
This is a surface with a singular point in $[0:0:0:1]$; the isomorphism with the weigthed projective plane is given explicitly by
$$w_0=z_0^2, w_1=z_0z_1, w_2=z_2^2, w_3=z_2^2.$$
\end{exmp}

\subsection{Derived category of $G$-equivariant sheaves} \label{Sec:dceq}
We will closely follow \cite{PloogThesis} and the related paper \cite{Ploog}, which contains a very extensive introduction to the topic.
As in Definition \ref{defn:groupaction}, every element $g \in G$ provides a map from $X$ to $X$. The pushforward $g_*$ and pullback $g^*$ of sheaves along the morphism $g$ make sense. We will consider the category of $G$-linearised coherent sheaves: it can be defined for any algebraic group, see \cite[Definition 3.1]{PloogThesis} and more in general \cite{BeLu}. When dealing with finite group, it is possible to work with this definition, equivalent to the general one.

\begin{defn}
Let $\sF$ be a coherent sheaf of $X$. A \textit{$G$-linearisation} of $\sF$ is a family of isomorphisms $\lambda:=\{\lambda_g:\sF \xrightarrow{\sim} g^*\sF\}_{g\in G}$ such that: $\lambda_{\id}=\id_\sF$ and $\lambda_{gh}=h^*\lambda_g \circ \lambda_h$ for every $g,h\in G$.
\end{defn}

The category $\coh^G(X)$ of $G$-equivariant coherent sheaves on $X$ consists of objects $(\sF,\lambda)$ where $\sF$ belongs to $\coh(X)$ and $\lambda$ is a $G$-linearisation of $\sF$. The morphisms  are given by $f:(\sF,\lambda)\to(\sF',\lambda')$ such that the following diagram commutes for every $g \in G$:
\[
\xymatrix{
\sF\ar[r]^-{\lambda_{g}}\ar[d]_{f}   & g^*\sF\ar[d]^{g^*f}\\
\sF'\ar[r]_-{\lambda'_g}              & g^*\sF'.
}
\]

Notice that the structure sheaf $\sO_X$ admits a canonical $G$-linearisation given by the push forward of regular functions $\sO_X \to g^* \sO_X$ for any $g \in G$.
The category $\coh^G(X)$ is abelian, so we can consider the bounded derived category  $\cD^G(X):=\cD(\coh^G(X))$ associated to it.

Assume now that the action of $G$ on $X$ is trivial. In this case, a $G$ linearisation of a coherent sheaf $\sF$ on $X$ is just given by a group homomorphism $\lambda: G \to \aut(\sF)$, that is also called representation of $G$. Recall that given a $\lambda$ invariant subsheaf $\sF_i \subset \sF$, we can restrict $\lambda$ to $\sF_i$ obtaining $\lambda_i: G \to \aut(\sF_i)$, called a subrepresentation of $G$. A representation is called irreducible if it does not admit non-trivial subrepresentations. We can consider the irreducible subrepresentations of $\lambda$, writing $(\sF,\lambda)$ as a direct sum of $(\sF_i,\lambda_i)$ in $\coh^G(X)$. We are interested in particular to the one corresponding to the identity automorphism of $\sF$, we will call it $(\sF_0,\lambda_0)$, where $\lambda_0: G \to \aut(\sF_0)$ is called the trivial action and sends every $g \in G$ to $\id_{\sF_0}$. The subsheaf $\sF_0$ is also called \textit{$G$-invariant part} of $\sF$. We can define the functor which equips each coherent sheaf with the trivial action as
\begin{align} 
\begin{split}
\triv: \coh(X) &\to \coh^G(X)\\
\sF &\mapsto (\sF,\lambda_0).
\end{split}
\end{align}
This functor has an adjoint which takes the $G$-invariant part:
\begin{align} 
\begin{split}
[\cdot]^G: \coh^G(X) &\to \coh(X)\\
(\sF,\lambda) &\mapsto \sF_0.
\end{split}
\end{align}
Notice that if the action of $G$ is not necessarely trivial, but there exist a normal subgroup $H$ acting trivially, we can construct $[\cdot]^H$ makes sense, and it goes from $\coh^G(X)$ to $\coh^{G/H}(X)$. Consider now two groups $G$ and $H$ acting on two smooth projective varieties $X$ and $Y$. Assume we have a homomorphism $\phi:G \to H$.

\begin{defn}
A \textit{$\phi$-map} between $X$ and $Y$ is a morphism $\alpha:X \to Y$ such that the following diagram commutes for every $g \in G$:
\[
\xymatrix{
X\ar[d]^g\ar[r]^\alpha & Y\ar[d]^{\phi(g)}\\
X\ar[r]^\alpha & Y.
}
\]
\end{defn}
This allows us to get a well defined map from $X/G$ to $Y/H$. We can define the \textit{equivariant pull back} of a $\phi$-map $\alpha$ as follows:
\begin{align*}
    \alpha^*:\coh^H(Y) &\to \coh^G(X)\\
    (\sF,\lambda) &\mapsto (\alpha^*\sF, \alpha^*\lambda),
\end{align*}
where $\alpha^*\lambda(g)$ sends $\alpha^*\sF$ to $g^*\alpha^*\sF$ for every $g \in G$. Assume $\phi$ surjective. In order to define the equivariant pushforward of $\alpha$ we must proceed in two steps. First recall that if $G=H$, the pushforward $\alpha_* \sF$ has a natural linearisation for every $\sF \in \coh^G(X)$. 
Notice that the group $G$ acts also on $Y$ thanks to the map $\phi$, so we have a well-defined pushforward from $\coh^G(X)$ to $\coh^G(Y)$. Second, notice that the kernel $K:= \ker(\phi)$ acts trivially on $Y$, so we can apply the functor $[\cdot]^K:\coh^G(Y) \to \coh^H(Y)$. The \textit{equivariant pushforward} is defined as the composition of these two functors:
$$\alpha^K_*:\coh^G(X) \xrightarrow{\alpha_*} \coh^G(Y) \xrightarrow{[\cdot]^K} \coh^H(Y).$$

\subsection{Fourier-Mukai transform} 

Consider two algebraic spaces $X$, $Y$.\ A functor $F$ from $\cD(X)$ to $\cD(Y)$ is called of Fourier-Mukai type if there exists an object $\sE^\bullet$ in $\cD^b(X\times Y)$ such that $F$ is isomorphic to the functor defined by
\begin{align*}
\Phi_{\sE^{\bullet}}:\cD(X) &\to\cD(Y)\\
                \cF^\bullet &\mapsto {q}_{Y,*}(q_X^*\cF^\bullet \otimes\sE^\bullet),
\end{align*}
where $q_X$ and $q_Y$ denote the projections from $X \times Y$ to $X$ and $Y$, respectively. Fourier-Mukai functors have a deep role in algebraic geometry, they are exact and well-behaved with respect to composition, and there is an active and rich field of study about finding conditions to describe which functors are actually of Fourier-Mukai type, see \cite{CaSeSurveyFM} for a survey on this topic.
When dealing with equivariant derived categories we have to slightly change the definition of Fourier-Mukai functors to take into account the linearised setting. Consider two finite groups $G$ and $H$ acting on two smooth varieties $X$ and $Y$, respectively. If we take a kernel $\sE^\bullet$ in $\cD^{G \times H}(X\times Y)$ we define its corresponding Fourier-Mukai transfom as
\begin{align*}
\Phi_{\sE^{\bullet}}:\cD^G(X) &\to\cD^H(Y)\\
                \cF^\bullet &\mapsto {q}_{Y,*}^G(q_X^*\cF^\bullet \otimes\sE^\bullet),
\end{align*}
where we are taking the equivariant pushforward along the map $q_Y$, since $G$ is the kernel of the projection $G \times H \to H$.

We can follow \cite{KS} in order to obtain that the derived category $\cD([X/G])$ of the stack $[X/G]$ is equivalent to the equivariant derived category $\cD^G(X)$, the same holds with $Y/H$. See also \cite[Remark 3.14]{PloogThesis}. It is then a consequence of \cite[Theorem 1.1]{Kaw} that all the exact equivalences between $\cD^G(X)$ and $\cD^G(Y)$ are of Fourier-Mukai type; in particular, the kernel $\sE^{\bullet}$ lives in $\cD^{G \times H}(X \times Y)$. 

Assume that the functor $\Phi_{\sE^\bullet}$ is an equivalence. It has a left adjoint which is a Fourier-Mukai functor with kernel $\sE_L^\bullet := \sE^{\bullet, \vee} \otimes q_Y^* \omega_Y[\dim(Y)]$, and a right adjoint given by $\sE_R^\bullet := \sE^{\bullet, \vee} \otimes q_X^* \omega_X[\dim(X)]$. The objects $\sE_L^\bullet$ and $\sE_R^\bullet$ are isomorphic and both $\Phi_{\sE_L^\bullet}$ and $\Phi_{\sE_R^\bullet}$ are inverse to $\Phi_{\sE^\bullet}$. See \cite[Section 5]{HuyFM} for details.

\section{Descent data and equivalences of perfect complexes} \label{sec:statementmainfunctors}
In all this section we will always work in the setting of two finite groups $G$ and $H$ acting on two smooth varieties $X$ and $Y$, respectively.

Thanks to the result of Kawamata all the exact equivalences between $\cD^G(X)$ and $\cD^H(Y)$ are of Fourier-Mukai type, so we will fix a kernel $\sE^{\bullet}$ in $\cD^{G\times H}(X\times Y)$ such that the corresponding Fourier-Mukai functor $\Phi_{\sE^{\bullet}}$ is an equivalence.
We can take the trivial action of $G$ on the quotient $X/G$ and consider the equivariant derived category $\cD^G(X/G)$, similar for $H$ and $Y/H$. Thanks to the results of the previous section in the context of coherent sheaves, we can consider the induced functors at the level of derived categories to get the following diagram:

\[
\xymatrix{
\cD^G(X)\ar[rr]^{\Phi_{\sE^{\bullet}}}  && \cD^H(Y)   \ar[d]_{\pi_{*}}\ar@/^3pc/[dd]^{\Pi^{H}_{*}}\\
\cD^G(X/G)\ar[u]_{\pi^*}         && \cD^H(Y/H)   \ar[d]_{[\cdot]^H}\\
\ar@/^3pc/[uu]^{\Pi^{G,*}}    \cD(X/G)\ar[u]_{\triv}             && \cD(Y/H) \\
\perf(X/G)\ar@{-->}[rr]\ar@{^{(}->}[u] \ar[urr]^{\Omega}     && \perf(Y/H) \ar@{^{(}->}[u]\\
}
\]
We wrote explicitly only the maps we will use, but obviously all the construction can be done symmetrically for both $(X,G)$ and $(Y,H)$.
We define the functor $\Pi^{G,*}$ as the composition $\pi^{*} \circ \triv$ and $\Pi^H_{*}$ as the composition $[\cdot]^H \circ \pi_{*}$. We will use the same name for the corresponding functors between the category of sheaves, and we will not specify of we are referring to the $X$ or the $Y$ side, since that will be always clear from the context. The functor $\Omega$ is defined to be the composition $\Pi^{H}_{*} \circ \Phi_{\sE} \circ \Pi^{G,*}$, restricted to the subcategory of perfect complexes of $X/G$.

\begin{rem}
There is another way to compare the equivariant and the non equivariant setting. Even when the action of $G$ is not free, we can always consider the restriction functor $\res: \coh^G(X) \to \coh(X)$ which just forget the linearisation. The functor $\res$ has an adjoint called the induction functor:
\begin{align*}
\inf:\mbox{Coh}(X)& \to \mbox{Coh}^G(X)\\
\sF &\mapsto (\bigoplus_{g\in G}g^*\sF,\rho_g),
\end{align*}
where $\rho_g$ is the linearisation which comes from the permutations of the summands. Both $\inf$ and $\res$, as well as $\triv$ and $[\cdot]^G$ commutes with pullback and pushforward. We refer to Section 2.2 of \cite{Krug} for the definition of these functors and their properties. These functor are not good for our puropose: we aim to find an equivalence, but the direct sum which defines the functor $\inf$ makes sure that there is no way to use this functor to get an equivalence, because it will send indecomposable objects to decomposable one.
\end{rem}

We want to find conditions to guarantee that the functor $\Omega$ is an equivalence, with image in the category $\perf(Y/H)$. 

\begin{defn} \label{def:descendcoherent}
A $G$-equivariant locally free sheaf of finite type $\sF$ \emph{descends} to $\coh(X/G)$ if there exists a locally free sheaf $\sV$ in $\coh(X/G)$ such that $\Pi^{G,*}(\sV)$ is isomorphic to $\sF$.
\end{defn}

By using the adjunction properties of the functors involved, Definition \ref{def:descendcoherent} turns out to be equivalent to requiring $\Pi^{G,*} \circ \Pi_{*}^G$ to be isomorphic to the identity functor of $\coh^G(X)$, where $\Pi^*_G$ denotes $\pi^* \circ \triv$ and $\Pi_*^G$ denotes $[\cdot]^G \circ \pi_*$.\ We are interested in a similar property in the case of the subcategory of perfect complexes.

\begin{defn}\label{3.2.3}
Let $\sF^\bullet$ be a $G$-equivariant complex of sheaves in $\cD^G(X)$. We say that $\sF^\bullet$ \textit{descends} to $\perf(X/G)$ if there exists a complex $\sV^\bullet$ in $\perf(X/G) \subset \cD(X/G)$ such that  $\Pi^{G,*}(\sV^\bullet)$ is isomorphic to $\sF^\bullet$.
\end{defn}
As it was for sheaves, it follows from the definition that if $\sF^\bullet$ descends to $\perf(X/G)$, then 
$$\Pi^{G,*} \circ \Pi ^G_*(\sF^\bullet) \simeq \sF^\bullet,$$ 
namely the projection has a left inverse. If we work in a open neighborhood of a point it is just the identity map, and this implies that the descent property is local.

The theorem below gives us a necessary and sufficient condition for $\Omega$ to be an equivalence.

\begin{thm}\label{teoremadellasvolta}
The functor{\ } $\Omega$ is an equivalence between $\perf(X/G)$ and $\perf(Y/H)$ if and only if the two following conditions are satisfied:
\begin{itemize}
\item[(i)] The object $\Phi_{\sE} \circ \Pi^{G,*}(\sF^{\bullet})$ descends to $\perf(Y/H)$, for all $\sF^{\bullet} \in \perf(X/G)$.
\item[(ii)] The object $(\Phi_{\sE})^{-1} \circ \Pi^{H,*}(\sV^{\bullet})$ descends to $\perf(X/G)$, for all $\sV^{\bullet}\in \perf(Y/H)$.
\end{itemize}
\end{thm}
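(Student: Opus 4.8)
The plan is to produce an explicit candidate for the inverse of $\Omega$ and to reduce everything to the behaviour of the adjunction counits. Recall that $\Pi^{G,*}=\pi^{*}\circ\triv$ and $\Pi^{H,*}=\pi^{*}\circ\triv$ are left adjoint to $\Pi^{G}_{*}=[\cdot]^{G}\circ\pi_{*}$ and $\Pi^{H}_{*}=[\cdot]^{H}\circ\pi_{*}$ respectively, and that by the geometric quotient property together with the projection formula the units of these adjunctions are isomorphisms, i.e. $\Pi^{G}_{*}\circ\Pi^{G,*}\simeq\id_{\cD(X/G)}$ and $\Pi^{H}_{*}\circ\Pi^{H,*}\simeq\id_{\cD(Y/H)}$; equivalently $\Pi^{G,*}$ and $\Pi^{H,*}$ are fully faithful. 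As recorded before Definition \ref{3.2.3}, an object descends to $\perf$ exactly when the corresponding counit is an isomorphism: concretely, $Z\in\cD^{H}(Y)$ descends to $\perf(Y/H)$ if and only if the counit $\Pi^{H,*}\Pi^{H}_{*}Z\to Z$ is an isomorphism and $\Pi^{H}_{*}Z\in\perf(Y/H)$ (and symmetrically on the $X$ side); note that the descent datum is then forced to be $\Pi^{H}_{*}Z$. Writing $\Psi:=\Phi_{\sE_{L}^{\bullet}}$ for the inverse of $\Phi_{\sE}$, so that $\Phi_{\sE}\circ\Psi\simeq\id$ and $\Psi\circ\Phi_{\sE}\simeq\id$, the natural candidate inverse is
\[
\Omega':=\Pi^{G}_{*}\circ\Psi\circ\Pi^{H,*}\colon\perf(Y/H)\longrightarrow\cD(X/G).
\]

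For the implication ($\Leftarrow$) I would argue as follows. Condition (i) says that for each $\sF^{\bullet}\in\perf(X/G)$ the object $Z=\Phi_{\sE}\Pi^{G,*}(\sF^{\bullet})$ descends to $\perf(Y/H)$; hence $\Omega(\sF^{\bullet})=\Pi^{H}_{*}(Z)$ indeed lies in $\perf(Y/H)$ and the counit yields $\Pi^{H,*}\Pi^{H}_{*}(Z)\simeq Z$. Similarly condition (ii) guarantees that $\Omega'$ has image in $\perf(X/G)$ and that $\Pi^{G,*}\Pi^{G}_{*}\Psi\Pi^{H,*}(\sV^{\bullet})\simeq\Psi\Pi^{H,*}(\sV^{\bullet})$ for $\sV^{\bullet}\in\perf(Y/H)$. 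Substituting these counit isomorphisms into the composites and using the unit isomorphisms of the previous paragraph together with $\Psi\Phi_{\sE}\simeq\id$ and $\Phi_{\sE}\Psi\simeq\id$ gives $\Omega'\Omega(\sF^{\bullet})\simeq\Pi^{G}_{*}\Psi\Phi_{\sE}\Pi^{G,*}(\sF^{\bullet})=\Pi^{G}_{*}\Pi^{G,*}(\sF^{\bullet})\simeq\sF^{\bullet}$ and, symmetrically, $\Omega\Omega'(\sV^{\bullet})\simeq\sV^{\bullet}$. Thus $\Omega'$ is a quasi-inverse and $\Omega$ is an equivalence. I expect this direction to be routine once the counit reformulation of descent is in place.

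For the implication ($\Rightarrow$) the strategy is to transport the problem inside $\cD^{H}(Y)$. Since $\Pi^{G,*}$ and $\Pi^{H,*}$ are fully faithful, they identify $\perf(X/G)$ and $\perf(Y/H)$ with their essential images $\mathcal{P}_{X}\subseteq\cD^{G}(X)$ and $\mathcal{P}_{Y}\subseteq\cD^{H}(Y)$, these being exactly the objects descending to $\perf$. Under these identifications $\Omega$ factors as the fully faithful functor $\Phi_{\sE}\Pi^{G,*}\colon\perf(X/G)\xrightarrow{\sim}\Phi_{\sE}(\mathcal{P}_{X})$ followed by $\Pi^{H}_{*}$, while $\Pi^{H}_{*}$ restricts to an equivalence $\mathcal{P}_{Y}\xrightarrow{\sim}\perf(Y/H)$ with inverse $\Pi^{H,*}$. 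Assuming $\Omega$ is an equivalence, $\Pi^{H}_{*}$ also restricts to an equivalence $\Phi_{\sE}(\mathcal{P}_{X})\xrightarrow{\sim}\perf(Y/H)$; the goal is to upgrade this to the equality $\Phi_{\sE}(\mathcal{P}_{X})=\mathcal{P}_{Y}$ of subcategories of $\cD^{H}(Y)$, which is precisely the conjunction of (i) (the inclusion $\Phi_{\sE}(\mathcal{P}_{X})\subseteq\mathcal{P}_{Y}$) and (ii) (the reverse inclusion). Full faithfulness of $\Omega$, read through the adjunction $\Pi^{H,*}\dashv\Pi^{H}_{*}$, says that precomposition with the counit $\Pi^{H,*}\Pi^{H}_{*}A\to A$ induces isomorphisms $\operatorname{Hom}(A,B)\simeq\operatorname{Hom}(\Pi^{H,*}\Pi^{H}_{*}A,B)$ for all $A,B\in\Phi_{\sE}(\mathcal{P}_{X})$, while essential surjectivity identifies the objects $\Pi^{H,*}\Pi^{H}_{*}A$ with all of $\mathcal{P}_{Y}$; combining these should force the counit to be an isomorphism and hence yield both inclusions.

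The main obstacle is exactly this last step: deducing that the counit $\Pi^{H,*}\Pi^{H}_{*}A\to A$ is an isomorphism, i.e. that $A=\Phi_{\sE}\Pi^{G,*}(\sF^{\bullet})$ genuinely descends. The difficulty is that $\Pi^{H}_{*}$ is \emph{not} conservative — its kernel contains the non-trivial isotypic summands supported at points with non-trivial stabiliser — so one cannot conclude the counit is invertible merely from the fact that it becomes invertible after applying $\Pi^{H}_{*}$. Nor can one appeal to a naive orthogonality argument, since $\mathcal{P}_{X}$ does not generate $\cD^{G}(X)$ (pullbacks from the quotient carry trivial stabiliser actions, whereas general equivariant objects do not); consequently the cone of the counit is only seen to be right-orthogonal to $\Phi_{\sE}(\mathcal{P}_{X})$ and to lie in $\ker\Pi^{H}_{*}$, which is not enough to kill it. The resolution I would pursue is to use essential surjectivity not orthogonally but to \emph{match} the two copies $\Phi_{\sE}(\mathcal{P}_{X})$ and $\mathcal{P}_{Y}$ lying over $\perf(Y/H)$: both are carried isomorphically onto $\perf(Y/H)$ by $\Pi^{H}_{*}$, and $\mathcal{P}_{Y}$ is the canonical coreflective such copy, so one shows that the comparison functor $\Pi^{H,*}\Pi^{H}_{*}\colon\Phi_{\sE}(\mathcal{P}_{X})\to\mathcal{P}_{Y}$ is an equivalence whose structural counit is therefore invertible. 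This is the only genuinely non-formal point; everything else reduces to the adjunction identities of the first paragraph.
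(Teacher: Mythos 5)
Your ($\Leftarrow$) direction is correct and is essentially the paper's argument in a different packaging: the paper checks full faithfulness and essential surjectivity separately, reading condition (i) as saying that $\Pi^{H,*}\circ\Pi^{H}_{*}$ is the identity on the image of $\Phi_{\sE^\bullet}\circ\Pi^{G,*}$, whereas you assemble the explicit quasi-inverse $\Omega'=\Pi^{G}_{*}\circ\Phi_{\sE^\bullet}^{-1}\circ\Pi^{H,*}$ and cancel units against counits. Both rest on the same two facts (fully faithfulness of $\Pi^{G,*}$, $\Pi^{H,*}$ and the counit reformulation of descent), and your version is, if anything, the more self-contained of the two.

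The problem is the ($\Rightarrow$) direction, exactly where you flag it, and your proposed repair does not close the gap. Knowing that $A\mapsto\Pi^{H,*}\Pi^{H}_{*}A$ restricts to an equivalence from $\Phi_{\sE^\bullet}(\mathcal{P}_{X})$ onto $\mathcal{P}_{Y}$ says nothing about the particular natural map $c_{A}:\Pi^{H,*}\Pi^{H}_{*}A\to A$: a functor can be an equivalence onto a subcategory while the comparison transformation to the inclusion has a nonzero cone, and, as you yourself observe, that cone is only constrained to lie in $\ker\Pi^{H}_{*}$ and in the left orthogonal of $\Phi_{\sE^\bullet}(\mathcal{P}_{X})$, which you have not shown to be zero. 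The phrase ``is an equivalence whose structural counit is therefore invertible'' is a non sequitur, not an argument. For comparison, the paper's own proof of this implication is the direct one: it takes $\Omega(\sF^{\bullet})=\Pi^{H}_{*}\bigl(\Phi_{\sE^\bullet}\Pi^{G,*}(\sF^{\bullet})\bigr)\in\perf(Y/H)$ as the candidate descent datum and asserts that $\Phi_{\sE^\bullet}\Pi^{G,*}(\sF^{\bullet})$ descends to it ``by definition'', then obtains (ii) by running the same reasoning on $\Omega^{-1}=\Pi^{G}_{*}\circ\Phi_{\sE^\bullet}^{-1}\circ\Pi^{H,*}$; it never engages with the invertibility of the counit. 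So you have correctly isolated the one step of the implication that carries real content, but your proposal, as written, leaves that step unproved.
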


\begin{proof}
Let us first assume (i) and (ii) holds and prove that $\Omega$ is indeed an equivalence with image in $\perf(Y/H)$. By construction, the functor $\Omega$ is defined from the category $\perf(X/G)$ to $\cD(Y/H)$. However, condition (i) guarantees us that every object $\Omega(\sF^\bullet)$ is isomorphic to an object in $\perf(Y/H)$.
In order to prove that $\Omega$ is fully faithful we recall that both $\Pi^{G,*}$ and $\Phi_\sE$ are fully faithful.\ Moreover, the descent in Condition (i) is equivalent to asking that $\Pi^{H,*} \circ \Pi^{H}_{*}$ restricted to the image of  $\Phi_{\sE} \circ \Pi^{G,*}$ is isomorphic to the identity $\mbox{Id}|_{\Phi_{\sE}\circ \Pi^{G,*}}$, hence $\Pi^H_*$ is fully faithful as well.\ Hence, the functor $\Omega$ is fully faithful because it is composition of three fully faithful functors.
It remains to check that $\Omega$ is essentially surjective. Since $\Phi_{\sE^\bullet}$ is an equivalence, we can consider the inverse $(\Phi_{\sE^\bullet})^{-1}$. Asking that $\Omega$ is essentially surjective is the same as asking that the functor $\Pi_*^G \circ (\Phi_{\sE^\bullet})^{-1} \circ \Pi^{H,*}$ is fully faithful. Condition (ii) guarantees exactly that.

Vice versa, assume that $\Omega$ is an equivalence between $\perf(X/G)$ and $\perf(Y/H)$. If we take an object $\sF^{\bullet} \in \perf(X/G)$, then $\Omega(\sF^\bullet)$ belongs to $\perf(Y/H)$ and it follows by definition that $\Phi_{\sE^\bullet} \circ \Pi^{G,*}(\sF^{\bullet})$ descends to $\Omega(\sF^\bullet)$. It remains to prove that Conditions (ii) also holds: since $\Omega$ is an equivalence we can consider its inverse $\Omega^{-1}$. Notice that it is given by $\Pi^{G}_{*}\circ \Phi_{\sE^\bullet}^{-1} \circ \Pi^{H,*}$ restricted to $\perf(Y/H)$. By applying the same reasoning as before take an object $\sV^{\bullet}\in \perf(Y/H)$, then $\Omega^{-1}(\sV^{\bullet})$ belongs to $\perf(X/G)$ and it follows by definition that $(\Phi_{\sE^\bullet})^{-1} \circ \Pi^{H,*}(\sV^{\bullet})$ descends to $\Omega^{-1}(\sV^{\bullet})$.
\end{proof}

Notice that actually Condition (i) is used in order to provide the fact that $\Omega$ has the right image and that is fully faithful. Condition (ii) is needed for the essential surjectivity of $\Omega$ and it can be interpreted, when it makes sense, as Condition (i) rephrased for the functor $\Omega^{-1}:=\Pi^{G}_{*}\circ \Phi_{\sE}^{-1} \circ\Pi^{H,*}$. 

\subsection{Positive and negative examples} \label{sec:positivenegativeexamples}

Let us now focus on some examples, in order to understand better the behaviour of this descent criterion. For simplicity, we will now set $X=Y$ and $G=H$. We give an example which shows how to build an autoequivalence of $\perf(X/G)$ starting from a Fourier-Mukai autoequivalence of $\cD^G(X)$. In particular, we deal with the case of projective varieties with ample canonical or anticanonical bundle, since all the autoequivalences of $\cD^G(X)$ are classified:

\begin{thm}\emph{\cite[Theorem 7.2.]{Kaw}}
Let $X$ be a smooth normal projective variety with ample canonical sheaf or ample anticanonical sheaf with the action of a finite group $G$. Assume that the canonical sheaf generates the local divisor class group at each point of $X$. Then, the group of isomorphism classes of exact autoequivalence $\cD^G(X)$ is generated by shifts, tensor products with $G$-equivariant invertible sheaves and push forward along $G$-equivariant automorphisms of $X$.
\end{thm}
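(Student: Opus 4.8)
The plan is to adapt the Bondal--Orlov reconstruction argument (see \cite{HuyFM}) to the equivariant setting, working with the identification $\cD^G(X)\simeq\cD([X/G])$ recalled above and exploiting the Serre functor. Since $[X/G]$ is a smooth proper Deligne--Mumford stack of dimension $n=\dim X$, the category $\cD^G(X)$ carries a Serre functor $S=(-)\otimes\omega_X[n]$, where $\omega_X$ is endowed with its canonical $G$-linearisation. Any exact autoequivalence $F$ commutes with $S$ up to isomorphism, because the Serre functor is intrinsic to the triangulated category; this is the single structural fact that drives the whole argument. The hypothesis that $\omega_X$ (or $\omega_X^{-1}$) is ample and generates the local divisor class group at every point is what makes the equivariant analogues of \emph{point object} and \emph{invertible object} behave as in the smooth projective case.

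First I would characterise intrinsically the point objects: an object $P$ with $S(P)\simeq P[n]$, with $\operatorname{Hom}^{<0}(P,P)=0$ and with $\operatorname{Hom}^0(P,P)$ a field. Using that $\omega_X^{\pm1}$ is ample together with $S(P)\simeq P[n]$, one shows exactly as in the non-equivariant case that such a $P$ is, up to shift, a $G$-equivariant skyscraper supported on a single orbit $Gx$ and attached to an irreducible representation of the stabiliser $G_x$; the field condition on $\operatorname{Hom}^0$ is guaranteed by Schur's lemma. Because $F$ commutes with $S$, it permutes the point objects, hence induces a bijection $\sigma$ on the set of orbits, i.e. on the closed points of the coarse space $X/G$, together with a compatible action on the stabiliser representations. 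Composing $F$ with a global shift, I may assume that $F$ sends point objects in the heart to point objects in the heart.

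Next I would isolate the invertible objects: those $L$ for which $\operatorname{Hom}^\bullet(L,P)$ is one-dimensional, concentrated in a single degree, for every simple point object $P$. These are precisely the shifts of $G$-equivariant line bundles. As $F$ preserves this class, composing with a further shift and with a twist by a $G$-equivariant invertible sheaf I may arrange $F(\cO_X)\simeq\cO_X$ with its canonical linearisation. At this stage the reconstruction step takes over: the graded $k$-algebra $\bigoplus_{m\ge0}\operatorname{Hom}_{\cD^G(X)}(\cO_X,S^{-m}\cO_X)$ recovers $X$ as a $\operatorname{Proj}$ by ampleness, while the linearisation data recover the $G$-action, so the bijection $\sigma$ promotes to a genuine $G$-equivariant automorphism $\alpha$ of $X$. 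Composing $F$ with push-forward along $\alpha^{-1}$ yields an autoequivalence fixing $\cO_X$, fixing all point and invertible objects, and commuting with $S$; a standard argument (the natural transformation built from the chosen isomorphism $F(\cO_X)\simeq\cO_X$ is an isomorphism on the generating family $\{\omega_X^{\otimes m}\}$, hence on all of $\cD^G(X)$) shows it is isomorphic to $\id$, which finishes the proof.

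The main obstacle is genuinely the stacky input rather than the formal skeleton of Bondal--Orlov. Two points require care: distinguishing point objects when the stabilisers are nontrivial, so that the representation theory of each $G_x$ is read off correctly and $F$ cannot mix orbits of different stabiliser type; and, in the reconstruction step, recovering not merely the variety $X$ but the full $G$-action from the categorical data. This is exactly where the hypothesis that $\omega_X$ generates the local divisor class group at each point is indispensable: it ensures that $\omega_X^{\pm1}$ is ample in the appropriate stacky sense on $[X/G]$, so that the Serre functor separates points and the graded algebra above is large enough to reconstruct both $X$ and its linearisation.
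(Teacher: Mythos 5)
This statement is quoted in the paper from \cite[Theorem 7.2]{Kaw} and is not proved there, so there is no internal proof to compare against; your sketch has to be measured against Kawamata's argument, which is indeed the Bondal--Orlov reconstruction adapted to smooth Deligne--Mumford stacks. Your overall skeleton (Serre functor, point objects, invertible objects, reconstruction of $X$ with its $G$-action, reduction to the identity) is the right one.

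There is, however, a concrete gap in the last step, and it is exactly where the hypothesis on the local divisor class group does its work --- a hypothesis you instead attribute to ``stacky ampleness,'' which is a separate assumption. You conclude by saying that a natural transformation which is an isomorphism on the family $\{\omega_X^{\otimes m}\}$ is an isomorphism everywhere ``by ampleness.'' In $\cD^G(X)$ this family, carrying the canonical linearisation, is \emph{not} in general a spanning class: $\operatorname{Hom}_{\cD^G(X)}(\omega_X^{\otimes m},-)$ computes $G$-invariants, so an object such as $\cO_x\otimes\rho$ at a fixed point $x$, with $\rho$ an irreducible representation of $G_x$ not occurring in $\Lambda^{\bullet}T_xX\otimes\omega_X^{-m}|_x$ for any $m$, receives no nonzero maps from any $\omega_X^{\otimes m}$ in any degree (the degenerate case of $G$ acting trivially makes this transparent: $\operatorname{Hom}^\bullet(\cO_X,\cO_X\otimes\rho)=\rho^G=0$ for $\rho$ nontrivial). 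The assumption that $\omega_X$ generates the local divisor class group at each point is precisely the statement that the character of $G_x$ on $\omega_X|_x$ generates the character group of $G_x$, which is what forces every stabiliser representation to be detected by some power of $\omega_X$ and rescues the spanning argument; without invoking it at this point the step fails. For the same reason your intrinsic characterisation of invertible objects ($\operatorname{Hom}^\bullet(L,P)$ one-dimensional in a single degree for \emph{every} simple point object $P$) is wrong as stated once a stabiliser has an irreducible representation of dimension at least two, since then $\operatorname{Hom}^\bullet(L,\cO_x\otimes\rho)=0$ for any equivariant line bundle $L$. Both issues are repairable, but they require the class-group hypothesis to be used where it actually matters rather than folded into ampleness.
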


In order to study weather an autoequivalence $\Phi_{\sE^\bullet}$ of $\cD^G(X)$ induces and autoequivalence of $\perf(X/G)$, it suffices to study what happens when $\Phi_{\sE^\bullet}$ is one of these generators. \medskip

The case of $\Phi_{\sE^\bullet}$ being a shift is straigthforward. Explicitly, ${\sE^\bullet}=\Delta^G_{*}(\sO_X[d])$ for a certain integer $d$, where we are taking the push forward of the map $\Delta:X \to X\times X$ composed with the functor $\triv$. Then, it is immediate to see that the corresponding $\Omega$ descends to an autoequivalence of $\perf(X/G)$. \medskip

Now assume that $\Phi_{\sE^\bullet}$ is a tensor product with an invertible $G$-equivariant sheaf $\sL$ on $X$, that is ${\sE^\bullet}=\Delta^G_{*}(\sL)$, in the same notation as before. In the context of vector bundles we have the following result:

\begin{thm}[Thm 2.3, \cite{drena}] \label{3.2.4}
A $G$-equivariant vector bundle $\sF$ on $X$ descends to $X/G$ if and only if the stabiliser ${G}_x$ acts trivially on the fibre $\sF \otimes k(x)$ for every $x$ in $X$.
\end{thm}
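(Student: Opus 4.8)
The plan is to prove the two implications separately, taking as the natural candidate for the descended bundle the sheaf $\sV := \Pi^G_*(\sF) = [\pi_*\sF]^G$ on $X/G$, together with the counit $\Pi^{G,*}\sV = \pi^*[\pi_*\sF]^G \to \sF$ of the $(\Pi^{G,*},\Pi^G_*)$-adjunction. The whole content then becomes showing that, under the fibre hypothesis, this $\sV$ is locally free and the counit is an isomorphism of $G$-equivariant bundles.

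Necessity is the easy direction. If $\sF \cong \Pi^{G,*}\sV = \pi^*\sV$ for some $\sV$ on $X/G$, then because $\pi$ is $G$-invariant (so $\pi\circ g=\pi$) the linearisation of $\pi^*\sV$ is the canonical one coming from $\triv$; under the identification $g^*\pi^*\sV = \pi^*\sV$ it is literally the identity. For $x\in X$ we have $\sF\otimes k(x) \cong \sV\otimes k(\pi(x))$, and any $g\in G_x$, fixing $x$, acts on this fibre through that identity comparison. Hence $G_x$ acts trivially. I would spell this out by unwinding the definition of $\alpha^*\lambda$ for the equivariant pullback along $\pi$.

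Sufficiency is the substance, and is a form of Kempf's descent lemma. Since $G$ is finite, for each $x\in X$ I would choose a $G_x$-invariant affine open $U\ni x$ whose translates $g(U)$, $g\notin G_x$, are disjoint from $U$; then $\pi^{-1}(\pi(U))=\bigsqcup_{gG_x}g(U)$ and, near $y=\pi(x)$, the quotient is modelled by $U\to U/G_x$, so that $\cO_{X/G,y}\cong(\cO_{X,x})^{G_x}$ and $[\pi_*\sF]^G$ is computed by $(\sF(U))^{G_x}$. This reduces the statement to a single finite group $G_x$ fixing the point $x$, acting on the local ring $A:=\cO_{X,x}$ with residue field $k=k(x)$, together with a finite projective $A$-module $M:=\sF_x$ carrying a compatible $G_x$-action. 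Now I run a $G_x$-equivariant Nakayama argument: since $\operatorname{char}k=0$ the invariants functor $(-)^{G_x}$ is exact (Reynolds operator), so the $G_x$-equivariant surjection $M\to M\otimes_A k(x)=\sF\otimes k(x)$ stays surjective on invariants; as the target carries the trivial $G_x$-action by hypothesis, I can lift a $k$-basis of the fibre to invariant elements $e_1,\dots,e_r\in M^{G_x}$. These generate $M\otimes_A k(x)$, hence generate $M$ by Nakayama, and being $r$ elements on a locally free module of rank $r$ they form an $A$-frame. Invariance of the $e_i$ forces, for $m=\sum a_i e_i\in M^{G_x}$, the relations $g(a_i)=a_i$, so $M^{G_x}=\bigoplus_i A^{G_x}e_i$ is free of rank $r$ over $A^{G_x}$ and $A\otimes_{A^{G_x}}M^{G_x}\xrightarrow{\sim}M$. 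Sheafifying, $\sV=[\pi_*\sF]^G$ is locally free of rank $r$ and the counit $\pi^*\sV\to\sF$ is an isomorphism, i.e.\ $\sF$ descends.

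The main obstacle is the local reduction to the stabiliser: everything downstream is formal once one is allowed to compute $\cO_{X/G}$ as $(\cO_X)^{G_x}$ and $[\pi_*\sF]^G$ as $(\sF)^{G_x}$ étale- or formally-locally at $x$, so the care lies in justifying this passage from the global finite quotient to the slice $U/G_x$ and in verifying that the invariant frame glues to a genuine vector bundle rather than merely a coherent or reflexive sheaf. The fibre hypothesis enters exactly, and only, at the step where the fibre basis is lifted to \emph{invariant} sections: without triviality of the $G_x$-action one cannot produce an invariant frame, and $[\pi_*\sF]^G$ fails in general to be locally free of the expected rank.
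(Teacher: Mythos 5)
The paper does not prove this theorem: it is quoted from \cite{drena} (the Drezet--Narasimhan/Kempf descent lemma) and used as a black box, so there is no in-paper argument to compare yours with. Your proposal is a correct, self-contained reconstruction of the standard proof. Necessity is fine: since $\pi\circ g=\pi$, the canonical linearisation of $\pi^{*}\sV$ is the identity under the identification $g^{*}\pi^{*}\sV\cong\pi^{*}\sV$, so any $g\in G_x$ acts trivially on $\sF\otimes k(x)\cong\sV\otimes k(\pi(x))$. For sufficiency, the reduction to the slice is indeed the only step needing real care, and your outline is the right one; to justify it you should note that $X$ projective gives an affine open $V$ containing the orbit of $x$, that $\bigcap_{g\in G}g(V)$ is again affine ($X$ being separated) and $G$-stable, and that one can shrink further to separate the cosets of $G_x$, after which $\pi^{-1}(\pi(U))=\bigsqcup_{gG_x}g(U)$ yields $\cO_{X/G}(\pi(U))=\cO_X(U)^{G_x}$ and $[\pi_{*}\sF]^G(\pi(U))=\sF(U)^{G_x}$. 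The remaining steps --- exactness of $(-)^{G_x}$ in characteristic zero to lift a fibre basis to invariant elements, Nakayama over the local ring, and semilinearity forcing $M^{G_x}=\bigoplus_i A^{G_x}e_i$ and hence $A\otimes_{A^{G_x}}M^{G_x}\xrightarrow{\sim}M$ --- are exactly the classical argument and are watertight under the paper's standing hypotheses. Your closing worry about gluing is actually vacuous: the candidate $\sV=[\pi_{*}\sF]^G$ and the counit $\pi^{*}\sV\to\sF$ are defined globally from the outset, and local freeness and being an isomorphism are local properties, so nothing needs to be glued.
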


\begin{cor}\label{3.2.5}
Let $\sL$ be a $G$-equivariant line bundle on $X$.\ If $n$ is a multiple of the order of $G$, then $\sL^{\otimes n}$ descends to the quotient.
\end{cor}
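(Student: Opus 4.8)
The plan is to verify the hypothesis of Theorem \ref{3.2.4} for the line bundle $\sL^{\otimes n}$: since a line bundle is in particular a vector bundle, and $\sL^{\otimes n}$ inherits a $G$-linearisation as the $n$-fold tensor power of the linearisation of $\sL$, it suffices to check that the stabiliser $G_x$ acts trivially on the fibre $\sL^{\otimes n} \otimes k(x)$ for every point $x$ of $X$. This reduces the statement to an elementary computation with characters of the finite group $G_x$.

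First I would fix a point $x \in X$ and observe that the fibre $\sL \otimes k(x)$ is a one-dimensional $k$-vector space. The linearisation of $\sL$ restricts, over the fixed point $x$, to an action of the stabiliser $G_x$ on this line; since $\aut(\sL \otimes k(x)) = k^{*}$, such an action is encoded by a character $\chi \colon G_x \to k^{*}$. The canonical identification $\sL^{\otimes n} \otimes k(x) \simeq (\sL \otimes k(x))^{\otimes n}$ is compatible with the linearisations, so the induced action of $G_x$ on $\sL^{\otimes n} \otimes k(x)$ is given by $\chi^{n}$, that is $g \mapsto \chi(g)^{n}$.

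Next I would exploit the finiteness of $G$. For every $g \in G_x$, the cyclic subgroup generated by $g$ is contained in $G$, so by Lagrange's theorem the order of $g$ divides $|G|$ and hence $g^{|G|} = \id$. Therefore $\chi(g)^{|G|} = \chi(g^{|G|}) = 1$, so $\chi(g)$ is a $|G|$-th root of unity. Writing $n = m\,|G|$ with $m$ a positive integer yields $\chi(g)^{n} = \bigl(\chi(g)^{|G|}\bigr)^{m} = 1$ for every $g \in G_x$, which is precisely the assertion that $G_x$ acts trivially on $\sL^{\otimes n} \otimes k(x)$.

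Since the point $x$ was arbitrary, Theorem \ref{3.2.4} then guarantees that $\sL^{\otimes n}$ descends to a line bundle on the quotient $X/G$, concluding the argument. I do not expect a genuine obstacle here: the only point demanding a little care is the compatibility of the linearisation of $\sL^{\otimes n}$ with the tensor-power structure on fibres, so that the relevant character is exactly $\chi^{n}$ and not some twist of it; once this is recorded, the root-of-unity argument is immediate.
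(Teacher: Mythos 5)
Your proof is correct and follows the same route as the paper: the action of the finite stabiliser $G_x$ on the one-dimensional fibre is a character whose values are roots of unity of order dividing $|G|$, so the $n$-th tensor power trivialises the action and Theorem \ref{3.2.4} applies. Your write-up merely spells out the character-theoretic bookkeeping that the paper leaves implicit.
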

\begin{proof}
Consider any point $x$ in $X$. The stabiliser $G_x$ is finite and its action on the fibres is represented by a one-dimensional homomorphism whose values must be roots of unity. After taking tensor powers for any multiple of the order of the group, the action of $G_x$ on the fibres of $\sL$ becomes trivial. The result follows by applying Theorem \ref{3.2.4}.
\end{proof}

As a consequence of the previous corollary the Fourier-Mukai transform associated to the kernel $\sE^\bullet:=\Delta_{*}(\sL)$ gives rise to a functor $\Omega:=\Pi^G_{*} \circ \Phi_{\sE^\bullet} \circ \Pi^{G,*}$:
\begin{align*}
\Omega:\perf(X/G)&\to\cD(X/G)\\
\sA^{\bullet}& \mapsto \Pi^G_{*}(\sL\otimes\Pi^{G,*}(\sA^{\bullet})).
\end{align*}
If $\sL$ descends to the quotient, the stabiliser $G_x$ acts trivially on $\sL\otimes k(x)$ for every $x$ in $X$, then $\Omega$ defines an autoequivalence of $\perf(X/G)$. If $\sL$ does not descends, then, according to Corollary \ref{3.2.5}, it suffices to take the tensor product of $\sL$ to the order of the group $G$. \medskip

Lastly, we deal with $G$-equivariant automorphisms, namely we assume ${\sE^\bullet}$ to be of the form $\sO_{\Gamma_f}$, where $\Gamma_f$ denotes the graph of a $G-$automorphism $f$ of $X$. Then $\Omega:=\Pi^G_{*} \circ \Phi_{\sE^\bullet} \circ \Pi^{G,*}$:
\begin{align*}
\Omega:\perf(X/G)&\to\cD(X/G)\\
\sA^{\bullet}& \mapsto \Pi^G_{*}(f_{*}(\Pi^{G,*}(\sA^{\bullet}))).
\end{align*}
 
Using the fact that the automorphisms of $X$ preserve the descending property, plus the fact that $\Pi^{G,*}(\sA^{\bullet})$ descends, by Theorem \ref{teoremadellasvolta}, we have that $\Omega$ descends to an equivalence of $\perf(X/G)$.
\medskip

Let us now consider an example of a functor $\Omega$ which does not descend. Consider the group $\bZ_2$ acting on $\bP^2$ by multiplying the last coordinate by $-1$. This particular action has already been described in the preliminaries, see Examples \ref{exa:action1} and \ref{exa:action2}. 
The $\bZ_2$-equivariant derived category $\cD^{\bZ_2}(\bP^2)$ has a nice description: it may be seen as the category of bounded complexes of free $\bZ_2$-$k[x,y]$-modules of finite type with generators having bounded degree, up to homotopy equivalence. See \cite{ter} and \cite{terArx} for more details. Moreover, we have an explicit way to check whether some objects are perfect or not. Indeed, it was shown in \cite{delorme} that the sheaves $\sO_{\bP^2(1,1,2)}(d)$ belongs to $\perf(\bP^2(1,1,2))$ if and only if $d$ is an even number.

Consider a Fourier-Mukai autoequivalence 
$$\Phi_{\Delta^{\bZ_2}_* \sO(1)}: \cD^{\bZ_2}(\bP^2) \to \cD^{\bZ_2}(\bP^2),$$
obtained by taking the equivariant push forward of the diagonal embedding $\Delta:\bP^2 \hookrightarrow \bP^2 \times \bP^2$, applied to $\sO(1)$.
It can be explicitly computed that, for every $\sA^\bullet$ in $\cD^{\bZ_2}(\bP^2)$, we have $\Phi_{\Delta^{\bZ_2}_* \sO(1)}(\sA^\bullet)=\sA^\bullet \otimes \sO(1)$.

The functor $\Omega$ sends the perfect object $\sO_{\bP(1,1,2)}$ to $\sO_{\bP(1,1,2)}(1)$, which is not perfect. Hence $\Omega$ cannot be an autoequivalence of $\perf(\bP(1,1,2))$.

By following the same lines, it is interesting to see what happens to the autoequivalence $\Phi_{\Delta^{\bZ_2}_* \sO(d)}$, for $d$ an even integer. In fact, if we take any $\sA^\bullet$ in $\perf(\bP[1,1,2])$, we have
$$\Omega(\sA^\bullet) = \Pi_*^{\bZ_2}(\sO_{\bP^2}(d) \otimes \pi^* \sA^\bullet) = \Pi_*^{\bZ_2}(\pi^* \sO_{\bP^2(1,1,2)}(d) \otimes \pi^* \sA^\bullet) = \sO_{\bP^2(1,1,2)}(d) \otimes \sA^\bullet.$$
This is enough to prove that, in this case, $\Omega$ descends.

\section{Criterion for the descent to perfect complexes} \label{sec:descent}
The aim of this section is to provide a criterion (Theorem \ref{PG}) for characterizing the complexes of $\cD^G(X)$ descending to perfect complexes of $\cD(X/G)$. 

We now want to exploit Theorem \ref{3.2.4} in order to get a descent criterion for $\perf(X/G)$. We start with two lemmas which give us homological informations about the abelian category $\coh^G(X)$: 

\begin{lem}\emph{\cite[Lemma 1.1.]{terArx}}\label{3.2.6}\
The homological dimension of $\coh^G(X)$ is at most $\dim(X)$. That is, for every pair of $G$-equivariant coherent sheaves $\sF$ and $\sG$, $\ext^i(\sF,\sG)=0$ for every $i \geq \dim(X)$.
\end{lem}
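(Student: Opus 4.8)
The plan is to reduce equivariant $\ext$-groups to ordinary $\ext$-groups on $X$ and then invoke smoothness. Recall first that since $X$ is smooth of dimension $n=\dim(X)$, the abelian category $\coh(X)$ has homological dimension at most $n$: every coherent sheaf admits locally a free resolution of length at most $n$, because each local ring $\sO_{X,x}$ is regular of dimension at most $n$, whence $\ext^i_{\coh(X)}(\sF,\sG)=0$ for all $i>n$ and all $\sF,\sG$. The goal is to transport this vanishing to $\coh^G(X)$.

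The key comparison is the identity
$$\ext^i_{\coh^G(X)}(\inf\sA,\sG)\;\cong\;\ext^i_{\coh(X)}(\sA,\res\sG)$$
for $\sA\in\coh(X)$ and $\sG\in\coh^G(X)$. To establish it I would use that the induction functor $\inf$ is exact (it is the finite direct sum $\bigoplus_{g\in G}g^*$ of pullbacks along automorphisms) and is left adjoint to the exact forgetful functor $\res$. Since its left adjoint $\inf$ is exact, the functor $\res$ preserves injectives; hence, choosing an injective resolution $\sG\to J^\bullet$ in the quasi-coherent equivariant category, the complex $\res J^\bullet$ is an injective resolution of $\res\sG$, and the degree-zero adjunction $\operatorname{Hom}_{\coh^G(X)}(\inf\sA,-)\cong\operatorname{Hom}_{\coh(X)}(\sA,\res(-))$ yields the displayed identity on cohomology.

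Next I would exploit that we work over a field of characteristic zero with $G$ finite, so a Maschke-type averaging is available. Using the linearisation $\lambda=\{\lambda_g\colon\sF\xrightarrow{\sim}g^*\sF\}$, the map $\eta\colon\sF\to\inf\res\sF=\bigoplus_{g\in G}g^*\sF$ with components $\lambda_g$ is a morphism in $\coh^G(X)$, and the averaged map $\tfrac{1}{|G|}\sum_{g\in G}\lambda_g^{-1}$, reading each summand back to $\sF$, provides an equivariant retraction of $\eta$. Thus every $\sF\in\coh^G(X)$ is a direct summand of $\inf\res\sF$.

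Combining the two ingredients finishes the proof: for any $\sF,\sG\in\coh^G(X)$ the group $\ext^i_{\coh^G(X)}(\sF,\sG)$ is a direct summand of
$$\ext^i_{\coh^G(X)}(\inf\res\sF,\sG)\;\cong\;\ext^i_{\coh(X)}(\res\sF,\res\sG),$$
which vanishes for $i>n$ by smoothness of $X$; hence $\ext^i_{\coh^G(X)}(\sF,\sG)=0$ in that range, i.e.\ $\coh^G(X)$ has homological dimension at most $\dim(X)$. The only genuinely delicate point is the homological bookkeeping in the first ingredient: ensuring the equivariant category has enough injectives (best arranged by passing to quasi-coherent equivariant sheaves, where $\ext$ agrees with the coherent one since $X$ is projective) and that $\res$ transports injective resolutions to injective resolutions. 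Once this is in place, the characteristic-zero splitting makes the remainder formal.
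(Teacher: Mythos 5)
The paper does not prove this lemma: it is imported verbatim from \cite[Lemma 1.1]{terArx}, so there is no internal argument to compare against. Your proof is correct and is one of the standard routes to this fact. The two ingredients are sound: (1) $\inf=\bigoplus_{g\in G}g^*$ is exact and left adjoint to the exact forgetful functor $\res$, so $\res$ preserves injectives (working in quasi-coherent equivariant sheaves, as you rightly flag, since $\coh^G(X)$ lacks enough injectives) and the adjunction derives to $\ext^i_{\coh^G(X)}(\inf\sA,\sG)\cong\ext^i_{\coh(X)}(\sA,\res\sG)$; (2) in characteristic zero the unit $\sF\to\inf\res\sF$ with components $\lambda_g$ splits via the averaged retraction $\tfrac{1}{|G|}\sum_g\lambda_g^{-1}$, whose equivariance follows from the cocycle condition $\lambda_{gh}=h^*\lambda_g\circ\lambda_h$. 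Additivity of $\ext$ in the first variable then transports the vanishing from the smooth non-equivariant side. Two small remarks. First, what you actually prove is $\ext^i=0$ for $i>\dim(X)$, which is the correct content of ``homological dimension at most $\dim(X)$''; the bound ``$i\geq\dim(X)$'' printed in the statement is an off-by-one slip in the paper, not in your argument. Second, an alternative and equally standard route --- closer in spirit to what the paper uses elsewhere (Lemma \ref{3.2.7}) --- is to resolve $\sF$ by $G$-equivariant locally free sheaves and truncate at step $\dim(X)$ using regularity of the local rings; your induction--restriction argument avoids having to produce equivariant resolutions at all, at the price of the Qcoh bookkeeping you mention.
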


\begin{lem}\emph{\cite[Lemma 1.2.]{terArx}}\label{3.2.7}\
Every $G$-equivariant coherent sheaf admits a finite resolution of $G$-equivariant locally free sheaves of finite type.
\end{lem}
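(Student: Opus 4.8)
The plan is to prove the lemma in three steps: produce enough $G$-equivariant locally free objects, iterate to obtain a (possibly unbounded) equivariant locally free resolution, and then truncate it after $\dim X$ steps, the finiteness being exactly the content secured by Lemma \ref{3.2.6}.

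First I would check that $\coh^G(X)$ has \emph{enough locally free objects}: every $\sF$ in $\coh^G(X)$ receives an epimorphism from a $G$-equivariant locally free sheaf of finite type. Forgetting the linearisation yields $\res\sF$ in $\coh(X)$, and since $X$ is projective there is a surjection $p\colon\cE\twoheadrightarrow\res\sF$ with $\cE=\sO_X(-m)^{\oplus N}$ locally free of finite rank for $m\gg 0$. The adjunction between $\res$ and the induction functor $\inf$ turns $p$ into a morphism $\inf\cE\to\sF$ in $\coh^G(X)$; on underlying sheaves this is the composite of $\bigoplus_{g\in G}g^\ast p$ with the canonical map $\bigoplus_{g\in G}g^\ast\sF\to\sF$ built from the linearisation, so it contains $p$ as the component indexed by the identity and is therefore surjective (and surjectivity of an equivariant map is exactly surjectivity of its underlying sheaf map). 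As $\inf\cE=\bigoplus_{g\in G}g^\ast\cE$ is a finite direct sum of locally free sheaves, it is $G$-equivariant locally free of finite type, as required.

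Next I would iterate this construction. Setting $\sK_{-1}:=\sF$, I choose at each stage an equivariant locally free epimorphism $\cE_j\twoheadrightarrow\sK_{j-1}$ and let $\sK_j:=\ker(\cE_j\to\sK_{j-1})$; since $\coh^G(X)$ is abelian these kernels exist and are again $G$-equivariant coherent. This produces a resolution $\cdots\to\cE_1\to\cE_0\to\sF\to 0$ together with short exact sequences $0\to\sK_j\to\cE_j\to\sK_{j-1}\to 0$. Writing $n=\dim X$, it remains to prove that the $n$-th syzygy $\sK_{n-1}$ is locally free, for then the truncated complex $0\to\sK_{n-1}\to\cE_{n-1}\to\cdots\to\cE_0\to\sF\to 0$ is the desired finite locally free resolution.

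The heart of the argument, and the step I expect to require the most care, is this local freeness of $\sK_{n-1}$. Because $\res$ is exact and faithful, applying it to the resolution gives an exact sequence of sheaves in which the $\res\cE_j$ are locally free; thus $\res\sK_{n-1}$ is the $n$-th syzygy of $\res\sF$ in a locally free resolution on the smooth projective variety $X$. At each point $x$ the local ring $\sO_{X,x}$ is regular of dimension at most $n$, hence of global dimension at most $n$, so the projective dimension of the stalk $(\res\sF)_x$ is at most $n$ and its $n$-th syzygy $(\res\sK_{n-1})_x$ is free. A coherent sheaf with free stalks is locally free, so $\res\sK_{n-1}$ is locally free; since local freeness is a property of the underlying sheaf, $\sK_{n-1}$ is $G$-equivariant locally free of finite type. (Equivalently, this finiteness is encoded by the homological dimension bound of Lemma \ref{3.2.6}.) The subtlety to keep in mind is that equivariant locally free sheaves are \emph{not} projective objects of $\coh^G(X)$, so one cannot invoke the abstract syzygy theorem inside $\coh^G(X)$ directly; passing through $\res$ and the classical regularity of $\sO_{X,x}$ is what legitimises the truncation.
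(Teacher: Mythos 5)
The paper does not prove this lemma at all: it is imported verbatim from \cite[Lemma 1.2]{terArx}, so there is no in-paper argument to compare against. Your proof is correct and is the standard argument one expects behind that citation (enough equivariant locally frees via $\inf$ applied to $\sO_X(-m)^{\oplus N}$, iterate, then truncate at the $\dim X$-th syzygy, whose local freeness is checked on stalks over the regular local rings $\sO_{X,x}$ after applying the exact faithful functor $\res$), including the correct caution that one cannot argue with projective objects inside $\coh^G(X)$ itself.
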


Given an element $\sF^\bullet$ in $\cD^G(X)$ it is not automatic that every sheaf composing $\sF^\bullet$ is $G$-equivariant. We will call \textit{finite locally free $G$-resolution} of $\sF^\bullet$ a complex $\sG^\bullet$ in $\cD^G(X)$ quasi-isomorphic to $\sF^\bullet$ such that, for every integer $i$, $\sG^{i}$ is a $G$-equivariant locally free sheaf of finite type.
The following corollary is an immediate consequence of lemmas \ref{3.2.6} and \ref{3.2.7}.

\begin{cor}\label{3.2.8}
Any $\sF^\bullet$ in $\cD^G(X)$ admits a finite locally free $G$-resolution.
\end{cor}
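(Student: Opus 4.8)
The plan is to produce, for a bounded complex $\sF^\bullet$ concentrated in cohomological degrees $[a,b]$, an explicit bounded complex of $G$-equivariant locally free sheaves quasi-isomorphic to it, by first resolving and then truncating. First I would observe that $\coh^G(X)$ has enough locally free objects: by Lemma \ref{3.2.7} every $G$-equivariant coherent sheaf admits a finite locally free resolution, and the terminal surjection of such a resolution exhibits it as a quotient of a $G$-equivariant locally free sheaf of finite type. The standard construction of a locally free resolution of a complex, carried out inside the abelian category $\coh^G(X)$, then yields a bounded-above complex $\sG^\bullet$ of $G$-equivariant locally free sheaves of finite type with $\sG^i = 0$ for $i > b$, together with a quasi-isomorphism $\sG^\bullet \to \sF^\bullet$. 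In particular the cohomology of $\sG^\bullet$ is concentrated in $[a,b]$, so $\sG^\bullet$ is exact in every degree $i < a$.

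Next I would truncate $\sG^\bullet$ to a bounded complex. Writing $d^i$ for its differential, fix an integer $m < a$ with $a - m \ge \dim X$, and set $Z := \ker(d^m \colon \sG^m \to \sG^{m+1})$, a $G$-equivariant subsheaf of $\sG^m$. Consider the bounded complex
\[
\sP^\bullet \colon \quad 0 \to Z \to \sG^m \to \sG^{m+1} \to \cdots \to \sG^b \to 0,
\]
with $Z$ placed in degree $m-1$ and the leftmost map the inclusion. The evident chain map $\sG^\bullet \to \sP^\bullet$, namely the identity in degrees $\ge m$, the corestriction of $d^{m-1}$ to $Z$ in degree $m-1$, and zero below, is a quasi-isomorphism: both complexes are exact in degrees $\le m$ (using $m < a$) and carry the same cohomology in degrees $> m$. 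Thus $\sP^\bullet$ is a bounded complex quasi-isomorphic to $\sF^\bullet$, every term of which, except possibly $Z$, is a $G$-equivariant locally free sheaf of finite type.

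It remains to check that $Z$ is locally free, which is where Lemma \ref{3.2.6} enters and which I expect to be the only substantive point. Using that $\sG^\bullet$ is exact in degrees $< a$, the terms between $m$ and $a-1$ assemble into an exact sequence of $G$-equivariant sheaves
\[
0 \to Z \to \sG^m \to \sG^{m+1} \to \cdots \to \sG^{a-1} \to \im(d^{a-1}) \to 0,
\]
which exhibits $Z$ as the $(a-m)$-th syzygy of $\im(d^{a-1})$ through the locally free sheaves $\sG^m, \dots, \sG^{a-1}$. Since the homological dimension of $\coh^G(X)$ is at most $\dim X$ by Lemma \ref{3.2.6}, in any locally free resolution the syzygies beyond the $(\dim X)$-th step are locally free; as $a - m \ge \dim X$, the underlying sheaf of $Z$ is locally free of finite rank, and being a $G$-equivariant subsheaf it is a $G$-equivariant locally free sheaf of finite type. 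Hence $\sP^\bullet$ is a finite locally free $G$-resolution of $\sF^\bullet$.

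The main obstacle is precisely this last local-freeness of the bottom term $Z$: it is the single place where the finiteness of the homological dimension is indispensable. A generic kernel or cokernel arising from truncating a resolution need not be locally free, and only its interpretation as a sufficiently high syzygy—combined with Lemma \ref{3.2.6}—forces the conclusion. Everything else, namely building the bounded-above equivariant locally free resolution, the truncation identity, and the verification of the quasi-isomorphism, is routine homological algebra internal to $\coh^G(X)$.
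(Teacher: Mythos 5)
Your argument is correct and is precisely the standard resolve-then-truncate argument the paper has in mind: it declares the corollary an immediate consequence of Lemmas \ref{3.2.6} and \ref{3.2.7} without spelling out details, and you use exactly those two lemmas in exactly those roles (enough equivariant locally frees to build a bounded-above resolution, finite homological dimension to make the truncated kernel locally free). Nothing to object to.
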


Now we are ready to give a descent criterion at the level of derived categories:

\begin{prp}\label{3.2.9}
The complex $\sF^\bullet$ in $\cD^G(X)$ descends to $\perf(X/G)$ if and only if there exists a finite locally free $G$-resolution $\sE^\bullet$ of $\sF^\bullet$ such that, for every $x$ in $X$ and every integer $i$, the stabiliser $G_x$ acts trivially on the fibre $\sE^{i} \otimes k(x)$.
\end{prp}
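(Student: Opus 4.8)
The plan is to prove Proposition \ref{3.2.9} by reducing the descent of a complex to the descent of each locally free sheaf in a suitable resolution, so that Theorem \ref{3.2.4} applies termwise. First I would establish the easy implication: suppose $\sF^\bullet$ descends to $\perf(X/G)$, meaning there is a complex $\sV^\bullet$ in $\perf(X/G)$ with $\Pi^{G,*}(\sV^\bullet) \simeq \sF^\bullet$. Since $\sV^\bullet$ is perfect, it is quasi-isomorphic to a bounded complex of locally free sheaves of finite rank on $X/G$, say $\sV^\bullet \simeq (\cdots \to \sW^i \to \sW^{i+1} \to \cdots)$ with each $\sW^i$ locally free. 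Applying the exact functor $\Pi^{G,*}$ termwise produces a complex $\sE^\bullet$ with $\sE^i := \Pi^{G,*}(\sW^i)$, which is a finite locally free $G$-resolution of $\sF^\bullet$. By the very definition of descent for sheaves (Definition \ref{def:descendcoherent}) each $\sE^i = \Pi^{G,*}(\sW^i)$ descends, so Theorem \ref{3.2.4} forces $G_x$ to act trivially on every fibre $\sE^i \otimes k(x)$. This gives the forward direction.

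For the converse, suppose we are given a finite locally free $G$-resolution $\sE^\bullet$ of $\sF^\bullet$ with $G_x$ acting trivially on each fibre $\sE^i \otimes k(x)$ for all $x$ and $i$. By Theorem \ref{3.2.4} each $\sE^i$ descends to a locally free sheaf $\sW^i$ on $X/G$, so that $\Pi^{G,*}(\sW^i) \simeq \sE^i$. The heart of the argument is to assemble the $\sW^i$ into an honest complex on $X/G$: I need to descend not only the objects but also the differentials $d^i: \sE^i \to \sE^{i+1}$. Here I would use the adjunction between $\Pi^{G,*}$ and $\Pi^G_*$ together with the fact (noted after Definition \ref{3.2.3}) that $\Pi^G_* \circ \Pi^{G,*}$ is the identity on descended objects, so that
\[
\operatorname{Hom}_{\coh^G(X)}(\sE^i, \sE^{i+1}) \simeq \operatorname{Hom}_{\coh(X/G)}(\sW^i, \sW^{i+1}).
\]
This isomorphism of Hom-groups lets me transport each $d^i$ to a morphism $\delta^i: \sW^i \to \sW^{i+1}$, and functoriality of the adjunction guarantees $\delta^{i+1}\circ\delta^i = 0$ because $d^{i+1}\circ d^i = 0$. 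The resulting bounded complex $\sW^\bullet$ of locally free sheaves of finite rank lies in $\perf(X/G)$, and $\Pi^{G,*}(\sW^\bullet) \simeq \sE^\bullet \simeq \sF^\bullet$, exhibiting the required descent.

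The main obstacle I expect is precisely this descent of differentials: establishing the Hom-group identification cleanly and verifying that the transported differentials square to zero. The subtlety is that the localness of the descent property (emphasised in the remark after Definition \ref{3.2.3}) is what makes the adjunction counit an isomorphism on the relevant objects; I would want to check carefully that $\Pi^{G,*}$ is fully faithful on the subcategory of descended locally free sheaves, so that the map on Hom-sets is genuinely bijective and not merely injective or surjective. A secondary point worth a remark is independence of the chosen resolution: the statement quantifies existentially over finite locally free $G$-resolutions, so for the forward direction I only need to exhibit one good resolution, which is exactly the one pulled back from $\sV^\bullet$; this sidesteps any need to compare different resolutions. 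Corollary \ref{3.2.8} guarantees such resolutions exist in general, which is what makes the criterion applicable to arbitrary $\sF^\bullet$ once the fibrewise triviality condition is checked.
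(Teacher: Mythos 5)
Your proposal is correct and follows essentially the same route as the paper: pull back a locally free representative termwise for the forward direction, and descend termwise via Theorem \ref{3.2.4} for the converse. The one point you flag as the ``heart of the argument''---descending the differentials---is handled in the paper simply by applying the functor $\Pi^G_*$ to the whole complex $\sE^\bullet$ (a functor applied to a complex is a complex, and each term is a vector bundle on $X/G$ by Theorem \ref{3.2.4}), which is exactly what your adjunction-based transport of the $d^i$ amounts to, so there is no gap.
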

\begin{proof}
Assume that $\sF^\bullet$ descends to $\perf(X/G)$, i.e. there exists a complex $\sB^\bullet$ in $\perf(X/G)$ such that $\Pi^{G,*}(\sB^\bullet)$ is quasi-isomorphic to $\sF^\bullet$. 

By adjunction, we get that $\Pi^G_* \circ \Pi^{G,*}(\sF^\bullet)$ is isomorphic to $\sF^\bullet$. Hence we have that $\sB^\bullet=\Pi^G_*(\sF^\bullet)$. 
Take a complex of vector bundles on $X/G$ quasi-isomorphic to $\sB^\bullet$ and denote it explicitly by 
$$\sV^\bullet=\{ 0\to \sV^1 \to \ldots \to \sV^n \to 0 \}.$$ 

We can apply the map $\Pi^{G,*}$ on the complex $\sB^\bullet$ by applying the corresponding map of sheaves termwise. We have the following quasi-isomorphisms:
$$\sF^{\bullet} \cong \Pi^{G,*}(\sB^\bullet) \cong \Pi^{G,*}(\sV^\bullet)  \cong \{ 0\to \Pi^{G,*}(\sV^1)\to \ldots \to \Pi^{G,*}(\sV^n)\to 0 \}.$$

The property of being locally free sheaves is preserved by $\Pi^{G,*}$, hence $\Pi^{G,*}(\sV^\bullet)$ is a complex of vector bundles which is quasi-isomorphic to $\sF^\bullet$. Furthermore, for any point $x$ in $X$, the stabiliser $G_x$ acts trivially on the fibres of $\Pi^{G,*}(\sV^i)$ since $\Pi^G_{*}(\Pi^{G,*}(\sV^i))=\sV^i$ is a vector bundle on $X/G$. This holds for every $i=1, \ldots, n$, proving the claim.\medskip

Vice versa, assume that there exists a finite locally free $G$-resolution $\sE^\bullet$ of $\sF^\bullet$ such that, for every $x$ in $X$, the stabiliser $G_x$ acts trivially on the fibre $\sE^i \otimes k(x)$ for every integer $i$.

The functor $\Pi^G_*(\sE^\bullet)$ is exact and, thanks to Theorem \ref{3.2.4}, we have that every $\sE^i \otimes k(x)$ descends to a vector bundle on the quotient. It follows that the image $\Pi^G_*(\sE^{\bullet})$ is a complex of vector bundles on $X/G$, that is an element of $\perf(X/G)$.
\end{proof}

Now we want to simplify this criterion by showing that it is possible to check the hypothesis concerning the stabiliser $G_x$ directly on the cohomology of the complex $\sF^\bullet$ and not on a specific finite locally free $G$-resolution. This is the content of Theorem \ref{PG}. We recall the following result about the descent of coherent sheaves.

\begin{lem}\emph{\cite[Lemma 2.14.]{nev}}\label{3.2.10}
Let $Z$ be an affine scheme with an action of a finite group $G$.\ Let $\sF$ be a $G$-equivariant coherent sheaf on $Z$, such that the action of the stabiliser $G_z$ on the vector space $\sF\otimes k(z)$ is trivial for every $z$ in $Z$. Then, there exists a $G$-equivariant locally free sheaf $\sE$ on $Z$ with a surjective map $\sE\to\sF$ such that $\sE$ descends to the quotient $Z/G$.
\end{lem}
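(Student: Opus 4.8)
The plan is to work affine-locally in terms of modules and to reduce the whole assertion to a single generation statement. Write $A=\cO(Z)$ and $B=A^{G}=\cO(Z/G)$; since $G$ is finite, $B$ is a finitely generated $k$-algebra, $A$ is finite over $B$, and the quotient map $\pi\colon Z\to Z/G=\operatorname{Spec}(B)$ is finite. The sheaf $\sF$ corresponds to a finitely generated $A$-module $M$ equipped with a compatible (semilinear) $G$-action, and the hypothesis says precisely that the stabiliser $G_{z}$ acts trivially on the fibre $M\otimes_{A}k(z)$ for every closed point $z$. The key reduction I would make is the following: if I can show that the submodule of invariants $M^{G}$ generates $M$ as an $A$-module, then the lemma follows at once. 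Indeed, choosing a surjection $P\twoheadrightarrow M^{G}$ from a finite free $B$-module $P$ and setting $\sE:=\Pi^{G,*}(\widetilde{P})$, which is the $G$-equivariant locally free sheaf associated to $A\otimes_{B}P$ with the linearisation coming from $A$, the composite $A\otimes_{B}P\to A\otimes_{B}M^{G}\to M$ is $G$-equivariant (the elements of $M^{G}$ are fixed) and surjective (as $M^{G}$ generates), while $\sE$ descends by construction, being $\Pi^{G,*}$ of a bundle on $Z/G$.

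The heart of the argument is therefore the claim that $A\cdot M^{G}=M$, and this is where the hypothesis enters. Since generation can be checked after localising, I would fix a closed point $z$ with stabiliser $\Gamma:=G_{z}$ and orbit $\{z_{1}=z,\dots,z_{s}\}$, $s=[G:\Gamma]$, and pass to the semilocalisation $A_{O}$ of $A$ at this orbit, which is $G$-stable, semilocal, and finite over the localisation of $B$ at $w=\pi(z)$. By the Chinese Remainder Theorem I can choose $a\in A_{O}$ which reduces to $1$ at $z$ and to $0$ at the remaining points $z_{2},\dots,z_{s}$. Given a class $\bar m\in M\otimes_{A}k(z)$, I lift it to $m\in M_{O}$, put $m'=am$, and average to obtain the invariant $\tilde m=\tfrac{1}{|G|}\sum_{g\in G}g\cdot m'\in (M_{O})^{G}$. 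Reducing $\tilde m$ at $z$, every term with $g\notin\Gamma$ dies because $g(a)$ vanishes at $z$, whereas for $g\in\Gamma$ the triviality of the $\Gamma$-action on the fibre $M\otimes_{A}k(z)$ forces $g\cdot m'$ to reduce to $\bar m$; hence $\tilde m$ reduces to $\tfrac{|\Gamma|}{|G|}\bar m=\tfrac{1}{s}\bar m$. As we are in characteristic zero, $s$ is invertible, so $s\,\tilde m$ lifts $\bar m$, proving that $(M_{O})^{G}$ surjects onto the fibre at $z$. By $G$-symmetry the same holds at every $z_{j}$, so Nakayama's lemma over the semilocal ring $A_{O}$ yields $A_{O}\cdot(M_{O})^{G}=M_{O}$. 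Since localisation at a $G$-stable multiplicative set commutes with taking invariants (a standard fact in characteristic zero, via the Reynolds projector), this says $(A\cdot M^{G})_{O}=M_{O}$ at every orbit, whence the coherent quotient $M/(A\cdot M^{G})$ has empty support and $A\cdot M^{G}=M$.

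Combining the two parts finishes the proof: the generation claim supplies the surjection from the descending equivariant bundle $\sE$ built in the first paragraph. The step I expect to be the main obstacle is exactly the generation claim $A\cdot M^{G}=M$; without the triviality hypothesis it is false, and the delicate point is to control the contributions of the group elements $g\notin\Gamma$ when reducing the averaged section at $z$. The Chinese Remainder ``bump function'' $a$ isolates the chosen point of the orbit so that only the stabiliser contributes, and it is precisely there that the fibrewise triviality hypothesis is used; the characteristic-zero assumption is needed both to average over $G$ and to invert the index $s$. A minor point still to verify carefully is that localisation commutes with invariants for the $G$-stable multiplicative set in play, but this is routine.
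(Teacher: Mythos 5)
The paper offers no proof of this lemma to compare against: it is imported verbatim from \cite[Lemma 2.14]{nev}. Your argument is a correct, self-contained proof. The reduction to the single generation statement $A\cdot M^{G}=M$ is the right move, and once that is granted the construction of $\sE$ as $A\otimes_{B}P$ with the linearisation coming only from $A$ does produce a $G$-equivariant free module, a $G$-equivariant surjection onto $M$, and descent by construction. The orbit-localisation and averaging argument for the generation claim is also sound: the Chinese Remainder bump function kills the contributions of $g\notin G_{z}$ when reducing at $z$, the fibrewise triviality hypothesis is used exactly where you say it is, and characteristic zero is needed both for the Reynolds average and to invert $s=[G:G_{z}]$. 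Three points you leave implicit all check out but deserve a line each in a written version: (a) to pass from surjectivity of $A_{O}\cdot(M_{O})^{G}$ onto each fibre $M_{O}\otimes k(z_{j})$ to $A_{O}\cdot(M_{O})^{G}+JM_{O}=M_{O}$, use that its image in $M_{O}/JM_{O}\cong\prod_{j}M_{O}\otimes k(z_{j})$ is an $A_{O}$-submodule and hence decomposes as a product of submodules over $\prod_{j}k(z_{j})$, so surjecting onto each factor suffices before invoking Nakayama; (b) $(S^{-1}M)^{G}=S^{-1}(M^{G})$ for a $G$-stable multiplicative set follows by rewriting any fraction over the $G$-invariant denominator $\prod_{g}g(s)$ and applying the projector $\frac{1}{|G|}\sum_{g}g$; (c) $M^{G}$ is finitely generated over $B=A^{G}$ because $A$ is finite over $B$ and $B$ is Noetherian, which is what allows $P$ to be taken finite free so that $\sE$ is coherent of finite rank. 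None of these is a genuine gap, and the argument is consistent with the paper's standing conventions (finite type over an algebraically closed field of characteristic zero, so the support argument over closed points is legitimate).
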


The following result is an adaptation to our case of the replacement tool found in \cite[Proposition 4.1.]{nev}.

\begin{lem}\label{3.2.12}
Consider a finite locally free $G$-resolution of $\sF^\bullet$ in $\cD^G(X)$:
$$
\xymatrix{
\sE^\bullet:=\{0\ar[r]&\sE^1\ar[r]^{\alpha_1} &\sE^2\ar[r]^{\alpha_2}&\ldots \ar[r]^-{\alpha_{n-2}}&\sE^{n-1}\ar[r]^-{\alpha_{n-1}}&\sE^n\ar[r]&0\}.
}
$$
If the action of the stabiliser $G_x$ is trivial on the $\sO_X$-modules $H^n(\sE^\bullet \otimes k(x))$, for every point $x$ in $X$, then $\sF^\bullet$ is quasi-isomorphic to a complex of $G$-equivariant locally free sheaves
$$
\xymatrix{
\sV^\bullet:=\{0\ar[r]&\sV^{-m}\ar[r] &\ldots \ar[r]&\sV^{0}\ar[r]&\sV^{1}\ar[r]& \ldots \ar[r]&\sV^{n}\ar[r] & 0\}
}
$$
such that $\sV^n$ descends to the quotient.
\end{lem}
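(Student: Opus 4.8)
The plan is to reduce everything to the top cohomology sheaf and then build a quasi-isomorphic complex by a locally free resolution whose top term is prescribed. Write $\sC := \coker(\alpha_{n-1})$, which is the top cohomology $H^n(\sE^\bullet)$. Since each $\sE^i$ is locally free, the formation of $H^n$ commutes with the right-exact functor $-\otimes k(x)$ at the top spot, so $H^n(\sE^\bullet\otimes k(x)) = \coker(\alpha_{n-1}\otimes k(x)) = \sC\otimes k(x)$. Hence the hypothesis says exactly that $G_x$ acts trivially on $\sC\otimes k(x)$ for every $x\in X$. Everything strictly below the top degree will be resolved freely, with no descent requirement, so it is only this single sheaf $\sC$ that carries the relevant constraint.

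The heart of the argument is to produce a single $G$-equivariant locally free sheaf $\sW$ that (i) descends to the quotient, (ii) admits an equivariant surjection $p\colon\sW\twoheadrightarrow\sC$, and (iii) lifts that surjection to a map $\tilde p\colon\sW\to\sE^n$ with $q\circ\tilde p = p$, where $q\colon\sE^n\to\sC$ is the canonical projection. I would take $\sO_X(1):=\pi^*\sO_{X/G}(1)$ for an ample sheaf on the projective quotient $X/G$; this is ample (pullback along a finite map), carries the canonical linearisation, and descends, so every $\sO_X(-N)=\Pi^{G,*}\sO_{X/G}(-N)$ descends and has trivial stabiliser action on fibres. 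For (ii), for $N\gg 0$ the twist $\sC(N):=\sC\otimes\sO_X(N)$ is globally generated, even after evaluation at the finite orbit of any point; since $G_x$ acts trivially on each fibre $\sC(N)\otimes k(x)$ and we work in characteristic zero, Maschke's theorem makes the formation of $G$-invariants exact, so passing to invariants in the $G$-equivariant evaluation surjection shows that the $G$-invariant sections already generate each fibre. Thus $\sW:=\sO_X(-N)^{\oplus r}$, with trivial linearisation and $r=\dim H^0(\sC(N))^G$, surjects equivariantly onto $\sC$ and descends. For (iii), equivariant Serre vanishing gives $\ext^1(\sW,\im(\alpha_{n-1}))=0$ for $N\gg 0$, and since $\im(\alpha_{n-1})=\ker q$ this is precisely the obstruction group, so $p$ lifts through $q$ to a map $\tilde p$.

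With $\sW$ and $\tilde p$ in hand, I would build a genuine quasi-isomorphism $\phi\colon\sV^\bullet\to\sE^\bullet$ of $G$-equivariant complexes by descending induction, starting from $\sV^n:=\sW$ and $\phi^n:=\tilde p$. Because $q\tilde p=p$ is surjective, $\phi$ already induces an isomorphism on $H^n$; the lower terms $\sV^{n-1},\sV^{n-2},\dots$ are then chosen as $G$-equivariant locally free covers of the successive syzygies, using arbitrary linearisations, exactly as in the standard construction of a locally free resolution of a bounded complex. By the finite homological dimension of $\coh^G(X)$ (Lemma \ref{3.2.6}) the kernel becomes locally free after at most $\dim X$ steps, so the process terminates and yields $\sV^\bullet$ in degrees $[-m,n]$ with all terms $G$-equivariant locally free, $\sV^n=\sW$ descending, and $\sV^\bullet\cong\sF^\bullet$.

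The main obstacle is the construction of $\sW$ in the middle step, for it is exactly there that the hypothesis on the fibres of $H^n(\sE^\bullet\otimes k(x))$ is consumed. Two features must cooperate: the promotion of fibrewise global generation to generation by $G$-invariant sections, which needs both the trivial stabiliser action and the characteristic-zero exactness of invariants, and the Serre-type vanishing that makes $p$ lift to $\sE^n$. Once the lift $\tilde p$ exists the remainder is a routine resolution argument; without it one would be forced to realise a connecting morphism of the truncation triangle in the derived category, whose representation by an honest chain map carrying the prescribed descending top term is not automatic, so securing the lift is what keeps the construction elementary.
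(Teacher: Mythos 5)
Your overall architecture coincides with the paper's: isolate $\sC=\coker(\alpha_{n-1})=H^n(\sE^\bullet)$, use the hypothesis on the fibres to produce a $G$-equivariant locally free sheaf that descends and surjects onto $\sC$ together with a lift of that surjection to $\sE^n$, and then complete to a quasi-isomorphic complex of $G$-equivariant locally free sheaves by the fibre-product/syzygy construction, terminating by the finite homological dimension of $\coh^G(X)$. Where you differ is in how the descending cover $\sW=\sV^n$ is produced: the paper declares the descent property local, passes to $G$-invariant affine charts, and invokes Lemma \ref{3.2.10} verbatim (there the lift $f_n$ is automatic since higher $\ext$ against a locally free sheaf vanishes on an affine), at the price of a rather quick gluing step at the end; you stay global, twisting by $\pi^*\sO_{X/G}(N)$ and using Maschke plus equivariant Serre vanishing to obtain both the invariant generating sections and the lift $\tilde p$. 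Your route buys a genuinely global $\sV^\bullet$ with no gluing, which is a real advantage.

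The one step you should not wave at is the claim that for a single $N\gg 0$ the $G$-invariant sections of $\sC(N)$ generate every fibre. Global generation only gives surjectivity of $H^0(\sC(N))\to\sC(N)\otimes k(x)$ one fibre at a time; to average a section to a $G$-invariant one with prescribed value at $x$ you must first choose it to vanish at the remaining points of the orbit $Gx$ (otherwise the averaging picks up uncontrolled contributions from those points), i.e.\ you need $H^0(\sC(N))\to H^0\bigl(\sC(N)|_{Gx}\bigr)$ surjective, and a priori the required $N$ depends on the orbit. The clean fix is to run the argument on the quotient: $[\pi_*\sC]^G\otimes\sO_{X/G}(N)$ is globally generated on the projective variety $X/G$ for a single $N\gg 0$, and the counit $\pi^*[\pi_*\sC]^G\to\sC$ is surjective because surjectivity is local and on invariant affine charts it is exactly what Lemma \ref{3.2.10} supplies; pulling back the resulting surjection $\sO_{X/G}(-N)^{\oplus r}\twoheadrightarrow[\pi_*\sC]^G$ then yields your $\sW=\sO_X(-N)^{\oplus r}\twoheadrightarrow\sC$. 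With that patch, and the routine verification that your descending induction really produces a quasi-isomorphism, your proof is complete.
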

\begin{proof}
Notice that we will get the descent property only for the sheaf $\sV^n$, this is enough for the proof of the main result. We will start by constructing explicitly the $\sV^i$, $i\geq 0$, while the $i < 0$ part will be constructed as a resolution of sheaves and glued to the previous one. The descent property is local, hence we can assume that $X$ is affine. We have a surjection $\rho_{n}:\sE^n\to \coker(\alpha_{n-1})$; this induces another surjection for all $x\in X$, given by tensoring with $k(x)$: 
$$\sE^n \otimes k(x)\to \coker (\alpha_{n-1}) \otimes k(x).$$
We know that $H^n(\sF^\bullet \otimes k(x)) = H^n(\sE^\bullet \otimes k(x))=\coker(\alpha_{n-1})\otimes k(x)$ is $G_x$-invariant. We can apply Lemma \ref{3.2.10}, which implies the existence of a $G$-equivariant locally free sheaf $\sV^n$ with a surjective morphism $\gamma_n:\sV^n\to \coker(\alpha_{n-1})$ and a morphism $f_n:\sV^n\to\sE^n$ such that $\rho_n \circ f_n=\gamma_n$. We found the last element of the complex $\sV^\bullet$.

In order to define $\sV^{n-1}$, consider the subsheaf $\sG^n\subset \sE^{n-1}\oplus \sV^n$ made by those sections $(e,v)$ such that $\alpha_{n-1}(e)=f_n(v)$. By using the second projection from the direct sum, we get a morphism $\beta'_{n-1}:\sG^n\to\sV^n$. 
Now we take a $G$-equivariant locally free sheaf $\sV^{n-1}$ with a surjective map onto $\sG^n$. By projecting on to the first summand, we obtain a map $f_{n-1}:\sV^{n-1}\to\sE^{n-1}$, and composing with $\beta'_{n-1}$ we obtain a morphism $\beta_{n-1}:\sV^{n-1}\to\sV^n$:
\[
\xymatrix{
\ldots\ar[r]&\mathscr{E}^{n-1}\ar[r]^{\alpha_{n-1}}&\sE^n\ar[r]\ar[dr]^{\rho_n}&0\\
&&& \coker(\alpha_{n-1})\\
&\sG^{n}\ar[uu]\ar[r]^{\beta'_{n-1}}&\sV^n\ar[uu]_{f_n}\ar[r]\ar[ur]_{\gamma_n}&0\\
&\ar@/^2pc/[uuu]^{f_{n-1}}\sV^{n-1}\ar[u]\ar[ur]_{\beta_{n-1}}&
}
\]
The map $f_n$ induces a map from $\sV^n/\im(\beta_{n-1})$ to $\coker(\alpha_{n-1})$ which is an isomorphism. It takes a standard diagram chasing argument to prove that $\ker(\gamma_n)=\im(\beta_{n-1}$): if a section $v$ of $\sV^n$ is such that $\gamma^n(v)=0$, then there exists a section $e$ of $\sE^{n-1}$ such that $\alpha_{n-1}(e)=f_n(v)$, and thus $\beta'_{n-1}(e,v) = v$; now it is sufficient to take a section $v'$ of $\sV^{n-1}$ such that $\beta'_{n-1}(v') = e$. If $v=\beta_{n-1}(v')=\beta_{n-1}(e, v)$ then  $\gamma_n(v)=\rho_n(\alpha_{n-1}(e))=0$. A similar argument allows us to prove that the map $\gamma_{n-1}:\ker(\beta_{n-1})\to \ker(\alpha_{n-1})/\im(\alpha_{n-2})$ is surjective as well. \medskip

We now iterate the process: consider the subsheaf $\sG^{n-1}\subset\sE^{n-2} \oplus\sV^{n-1}$ made by those sections $(e,v)$ such that $\alpha_{n-2}(e)=f_{n-1}(v)$. We have a natural morphism $\beta'_{n-2}:\sG^{n-1}\to \sV^{n-1}$. Take a $G$-equivariant locally free sheaf $\sV^{n-2}$ with a surjective map onto $\sG^{n-1}$. Then, we obtain a map $f_{n-2}:\sV^{n-2}\to\sE^{n-2}$, and composing with $\beta'_{n-2}$ we obtain a morphism $\beta_{n-2}:\sV^{n-2}\to\sV^{n-1}$. Finally, we get that $\ker(\gamma_{n-1})$ corresponds to $\im \beta_{n-2}$.

We have the following situation:
\[
\xymatrix{
\ldots\ar[r]&\sE^{n-2}\ar[r]^{\alpha_{n-2}}&\sE^{n-1}\ar[r]^{\alpha_{n-1}}&\sE^n\ar[r]&0\\
&&\sG^{n}\ar[u]\ar[r]^{\beta'_{n-1}}&\sV^n\ar[u]^{f_n}\ar[r]&0\\
&\sG^{n-1}\ar[uu]\ar[r]^{\beta'_{n-2}}&\ar@/^2pc/[uu]^{f_{n-1}}\sV^{n-1}\ar[u]\ar[ur]_{\beta_{n-1}}&\\
&\sV^{n-2}\ar@/^3pc/[uuu]^{f_{n-2}}\ar[u]\ar[ur]_{\beta_{n-2}}&&
}
\]
We can iterate this construction until we get a complex of $G$-equivariant vector bundles
\[
\xymatrix{
0\ar[r]&\sV^1 \ar[r]^{\beta_1}&\sV^2 \ar[r]^-{\beta_2}& \ldots \ar[r]^-{\beta_{n-2}}&\sV^{n-1}\ar[r]^{\beta_{n-1}}&\sV^n\ar[r]&0
}
\]
such that $\gamma_j:\sV^j \to \coker(\alpha_{j-1})$ induces an isomorphism $H^j(\sV^{\bullet})\simeq H^j(\sE^{\bullet})$, for $2 \le j\le n$, and $\gamma_1:\sV^1 \to \sE^1$ is surjective map. Moreover, by construction, $\sV^n$ descends to the quotient. Thus, we can take the resolution of the subsheaf $i: \ker(\gamma_1) \hookrightarrow \sV^1$:
\[
\xymatrix{
0\ar[r]&\mathscr{W}^{-m}\ar[r]& \ldots \ar[r]&\mathscr{W}^{-1}\ar[r]&\mathscr{W}^0\ar[r]^-{\omega}&\ker(\gamma^1)\ar[r]&0.
}
\]

We obtain in this way the final form of the complex $\sV^\bullet$.
\[
\xymatrix@C=2em{
0\ar[r]&\sW^{-m}\ar[r]& \ldots \ar[r]&\sW^{0}\ar[r]^{i\circ\omega}&\sV^1\ar[r]^{\beta_1}&\sV^2\ar[r]& \ldots \ar[r]&\sV^n\ar[r]&0
}.
\]
It is a complex of $G$-equivariant vector bundles quasi-isomorphic to $\sE^{\bullet}$ such that $\sV^n$ descends to the quotient. These sheaves glue together because they are all quasi-isomorphic to $\sF^\bullet$.
\end{proof}

We point out that since the complex $\sV^\bullet$ found in the previous lemma is quasi-isomorphic to $\sE^\bullet$, we have that $H^i(\sV^\bullet)=0$, for $i= -m, \ldots, 0$. We will need the following result about the action of a finite group acting on $\bC$-vector spaces.

\begin{lem}\label{3.2.11}
Let $H$ be a finite group acting on three $k$-vector spaces $V_1$, $V_2$ and $V_3$, sitting in an exact sequence
\[
\xymatrix{
V_1\ar[r]^{\alpha}&V_2\ar[r]^{\beta}&V_3,
}
\]
where $\alpha$ and $\beta$ are equivariant maps.
If $H$ acts trivially on $V_1$ and on $V_3$, then $H$ acts trivially on $V_2$ as well.
\end{lem}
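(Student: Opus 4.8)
The plan is to reduce the statement to the complete reducibility of representations of a finite group in characteristic zero. First I would extract the only consequence of exactness that is needed: since $\im(\alpha) = \ker(\beta)$, the space $V_2$ fits into a short exact sequence of $H$-representations
\[
0 \to \ker(\beta) \to V_2 \to \im(\beta) \to 0,
\]
in which both maps are $H$-equivariant because $\alpha$ and $\beta$ are, and the quotient $V_2/\ker(\beta)$ is identified with $\im(\beta) \subset V_3$.

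Next I would check that the two outer terms carry the trivial $H$-action. The submodule $\ker(\beta) = \im(\alpha)$ is the image of the equivariant map $\alpha$ originating from $V_1$; since $H$ acts trivially on $V_1$ and $\alpha$ is equivariant, for every $v \in V_1$ and $h \in H$ we have $h \cdot \alpha(v) = \alpha(h \cdot v) = \alpha(v)$, so $\ker(\beta)$ is a trivial representation. Symmetrically, $\im(\beta)$ is an $H$-stable subspace of $V_3$, and as $H$ acts trivially on $V_3$ it acts trivially on $\im(\beta)$ as well.

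Finally I would invoke Maschke's theorem: $H$ is finite and the base field $k$ has characteristic zero, so every short exact sequence of finite-dimensional $H$-representations splits. Applying this to the sequence above produces an $H$-equivariant isomorphism $V_2 \cong \ker(\beta) \oplus \im(\beta)$, a direct sum of trivial representations, whence $H$ acts trivially on $V_2$. The only genuinely essential point — and the step one must not skip — is this appeal to complete reducibility: an extension of a trivial representation by a trivial representation need not itself be trivial when $\operatorname{char} k$ divides $|H|$, so the standing assumption that $k$ has characteristic zero (equivalently, that $|H|$ is invertible in $k$) is precisely what makes the conclusion hold.
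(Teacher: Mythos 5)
Your argument is correct, but it takes a different route from the paper. You package the hypothesis into the short exact sequence $0 \to \ker(\beta) \to V_2 \to \im(\beta) \to 0$, observe that both outer terms are trivial representations (the kernel because it equals $\im(\alpha)$ and $\alpha$ is equivariant out of a trivial module, the image because it sits inside $V_3$), and then invoke Maschke's theorem to split the extension equivariantly. The paper instead argues by hand on a single element: for $x \in V_2$ and $h \in H$, equivariance of $\beta$ and triviality on $V_3$ force $hx - x = \alpha(y)$ for some $y \in V_1$; since $\im(\alpha)$ is a trivial subrepresentation, iterating gives $h^n x - x = n\,\alpha(y)$, and $h^n = \id$ together with $\operatorname{char} k = 0$ forces $\alpha(y) = 0$, i.e.\ $hx = x$. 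The two proofs use the characteristic-zero hypothesis in the same essential place (an extension of trivial by trivial can be nontrivial in characteristic $p \mid |H|$, e.g.\ $\bZ/p$ acting on $k^2$ by a unipotent matrix), and you are right to flag this as the step one must not skip. Your version is shorter and makes the structural reason transparent, at the cost of citing complete reducibility; the paper's version is self-contained and only needs the torsion-freeness of $k$ applied to the single ``coboundary defect'' $hx - x$, which is slightly weaker than full semisimplicity. One cosmetic point: the lemma does not assume the $V_i$ are finite-dimensional, but the averaging proof of Maschke (and the paper's direct argument) works without that assumption, so nothing is lost.
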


\begin{proof}
Let $x$ be in $V_2$ and let $h$ be in $H$. If $\beta(x)=0$ then $x=\alpha(y)$ for a certain $y\in V_1$; so $hx=h\alpha(y)=\alpha(hy)=\alpha(y)=x$. Suppose now that $\beta(x)\neq 0$. We have that $\beta(hx)=h\beta(x)=\beta(x)$, so $\beta(hx-x)=0$. Hence, there exist $y \in V_1$ such that $x=hx-\alpha(y)$. Let make $h$ act again, since the action on $V_1$ is trivial we get $hx=h^2x-\alpha(y)$, which gives $x=h^2x-2\alpha(y)$.
Since $H$ is finite there exist an integer $n$ such that $h^n=\id$. By repeating the previous computation we get $x=h^nx-n\alpha(y)$, so $\alpha(y)=0$. This concludes the proof.
\end{proof}

Now we are ready to prove the main theorem of this section.

\begin{thm}\label{PG}
Let $G$ be a finite group acting on a smooth projective variety $X$.\ An object $\sF^\bullet$ in $\cD^G(X)$ descends to $\perf(X/G)$ if and only if the following condition holds.

\begin{itemize}
\item[{$(\star)$}] The stabiliser $G_x$ acts trivially on the $\sO_X$-modules $H^j(\sF^\bullet \otimes k(x))$, for all $x \in X, j \in \bZ$.
\end{itemize}
\end{thm}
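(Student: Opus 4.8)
Since Proposition \ref{3.2.9} already characterises descent by the existence of a finite locally free $G$-resolution whose terms carry fibrewise trivial stabiliser action, the theorem amounts to saying that this can be detected on the cohomology of the derived fibres. I would treat the two implications separately. For the forward implication, suppose $\sF^\bullet$ descends and pick, via Proposition \ref{3.2.9}, a finite locally free $G$-resolution $\sE^\bullet$ on whose fibres every $G_x$ acts trivially. As the $\sE^i$ are locally free, $\sE^\bullet\otimes k(x)$ computes the derived fibre, so each $H^j(\sF^\bullet\otimes k(x))=H^j(\sE^\bullet\otimes k(x))$ is obtained from the trivial $G_x$-representations $\sE^i\otimes k(x)$ by $G_x$-equivariant kernels and cokernels; a subquotient of trivial representations is trivial, which is exactly $(\star)$.

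For the converse I would show that $(\star)$ upgrades an arbitrary finite locally free $G$-resolution (available by Corollary \ref{3.2.8}) to one meeting the hypothesis of Proposition \ref{3.2.9}, by stripping off one degree at a time. Using that the kernel of a surjection of vector bundles is again a vector bundle, I may first trim the resolution so that its top degree equals the top cohomological degree $n$ of $\sF^\bullet$, and then induct on the cohomological amplitude. By $(\star)$ the stabilisers act trivially on the top derived fibre $H^n(\sF^\bullet\otimes k(x))$, so Lemma \ref{3.2.12} produces a quasi-isomorphic complex $\sV^\bullet$ of $G$-equivariant locally free sheaves whose top term $\sV^n$ descends. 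The inclusion of the top term as a subcomplex gives the distinguished triangle
\[ \sV^n[-n]\to\sV^\bullet\to\sigma_{<n}\sV^\bullet\xrightarrow{\,w\,}\sV^n[-n+1], \]
where $\sigma_{<n}\sV^\bullet$ is the stupid truncation; since every term is locally free, applying $-\otimes k(x)$ keeps it a triangle and yields the long exact sequence of derived fibres.

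The crux is that $\sigma_{<n}\sV^\bullet$ again satisfies $(\star)$. In degrees $\le n-2$ its derived fibres coincide with those of $\sV^\bullet$, hence are trivial; in degree $n-1$ the long exact sequence collapses to
\[ 0\to H^{n-1}(\sV^\bullet\otimes k(x))\to H^{n-1}(\sigma_{<n}\sV^\bullet\otimes k(x))\to \sV^n\otimes k(x)\to H^n(\sV^\bullet\otimes k(x))\to 0, \]
a sequence of $G_x$-representations whose outer terms are trivial by $(\star)$ and in which $\sV^n\otimes k(x)$ is trivial because $\sV^n$ descends (Theorem \ref{3.2.4}). The image of the middle map is a subrepresentation of $\sV^n\otimes k(x)$, hence trivial, so $H^{n-1}(\sigma_{<n}\sV^\bullet\otimes k(x))$ sits in an exact sequence with trivial outer terms, and Lemma \ref{3.2.11} forces it to be trivial as well. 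Thus $\sigma_{<n}\sV^\bullet$ satisfies $(\star)$ with strictly smaller amplitude and descends by induction, say $\sigma_{<n}\sV^\bullet\simeq\Pi^{G,*}(\sP^\bullet)$, while $\sV^n\simeq\Pi^{G,*}(\sU)$ for a vector bundle $\sU$ on $X/G$. Since $\Pi^{G,*}$ is fully faithful, the connecting map $w$ equals $\Pi^{G,*}(\tilde w)$ for a unique $\tilde w\colon\sP^\bullet\to\sU[-n+1]$ in $\perf(X/G)$. Completing $\tilde w$ to a distinguished triangle $\sU[-n]\to\sQ^\bullet\to\sP^\bullet\xrightarrow{\tilde w}\sU[-n+1]$ in $\perf(X/G)$, with $\sQ^\bullet$ perfect as the cone of a morphism of perfect complexes, and applying the triangulated functor $\Pi^{G,*}$ gives a triangle with the same two outer vertices and connecting map as the one defining $\sV^\bullet$; hence $\Pi^{G,*}(\sQ^\bullet)\simeq\sV^\bullet\simeq\sF^\bullet$, so $\sF^\bullet$ descends.

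The step I expect to be most delicate is the inductive bookkeeping. Lemma \ref{3.2.12} may lengthen the complex in negative degrees, so the invariant must be the cohomological amplitude rather than the number of terms; the truncation strictly lowers the top cohomological degree while preserving the bottom one, and the base case of cohomology concentrated in a single degree is cleared by a secondary induction on the length of the resolution, finite by Lemma \ref{3.2.6}, ending at a single locally free sheaf to which Theorem \ref{3.2.4} applies directly. The other point requiring care is precisely the triviality transfer across the truncation, where Lemma \ref{3.2.11} is the tool that promotes triviality of the two outer terms of the four-term sequence to the middle term.
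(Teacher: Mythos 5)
Your proposal follows essentially the same route as the paper's proof: the forward implication is read off from Proposition \ref{3.2.9} (cohomology of a complex of fibrewise-trivial $G_x$-representations is trivial), and the converse is an induction in which Lemma \ref{3.2.12} supplies a resolution whose top term descends, the stupid-truncation triangle gives a four-term exact sequence of derived fibres in degree $n-1$, and Lemma \ref{3.2.11} transfers triviality to $H^{n-1}(\sigma_{<n}\sV^\bullet\otimes k(x))$ so that the inductive hypothesis applies. Your concluding gluing step is a harmless variant: you lift the connecting morphism through the fully faithful $\Pi^{G,*}$ and form the cone in $\perf(X/G)$, whereas the paper applies the five-lemma to the comparison maps between the truncation triangle and its image under $\Pi^{G,*}\Pi^G_*$; both are correct, and the paper's version avoids having to invoke the (non-functorial) uniqueness of cones.

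The one genuine gap is the base case. You dismiss it as ``a secondary induction on the length of the resolution, finite by Lemma \ref{3.2.6}, ending at a single locally free sheaf to which Theorem \ref{3.2.4} applies directly,'' but this skips the step where condition $(\star)$ actually produces descent data: for a sheaf $\sF$ satisfying $(\star)$, the terms of an \emph{arbitrary} finite locally free $G$-resolution need not have fibrewise trivial stabiliser action, so there is nothing to which Theorem \ref{3.2.4} applies. What is needed, and what the paper does, is Lemma \ref{3.2.10}: since $G_x$ acts trivially on $\sF\otimes k(x)$, there is a $G$-equivariant locally free sheaf $\sV_1$ that descends together with a surjection $\sV_1\twoheadrightarrow\sF$; one must then show that the syzygy kernel $\sK_1$ again satisfies the fibrewise triviality, by sandwiching $\tor_j(\sK_1,k(x))$ between trivial $G_x$-representations in the long exact $\tor$-sequence and invoking Lemma \ref{3.2.11}, and iterate until the finite homological dimension of $\coh^G(X)$ (Lemma \ref{3.2.6}) forces the kernel to be locally free, at which point Theorem \ref{3.2.4} finishes. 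Without Lemma \ref{3.2.10} and this propagation to the kernels, the base case does not close; the rest of your argument is sound.
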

\begin{proof}
Start by assuming that $\sF^{\bullet}$ descends, and prove that condition $(\star)$ holds. From the descent condition we get a finite $G$-resolution $\sE^{\bullet}$ of $\sF^{\bullet}$ obtained by Proposition \ref{3.2.9}. Since $G_x$ acts trivially on $\sE^{\bullet}\otimes k(x)$, then it must act trivially also on $H^j(\sE^{\bullet}\otimes k(x))$, for all $j$, because the action of the group commutes with taking cohomology. The claim follows from this isomorphism of $\sO_X$-modules:
\[
\xymatrix{
H^j(\sF^{\bullet}\otimes k(x))\ar[r]^{\sim}&H^j(\sE^{\bullet}\otimes k(x)).
}
\]

Vice versa, we assume that condition $(\star)$ holds.\ We will proceed by induction on the number $n$ of non-trivial cohomologies of $\sF^\bullet$, assuming that $H^j(\sF^{\bullet}) = 0$ unless $j = j_1, \ldots, j_n$.
If $n=1$ the statement reduces to the case in which we assume that $\sF^{\bullet}$ is a coherent sheaf $\sF$. The descent property is local, so we can suppose $X$ to be affine. If condition $(\star)$ holds, then in particular $G_x$ acts trivially on $\sF \otimes k(x)$ for every $x$ in $X$:
$$H^j(\sE^{\bullet} \otimes k(x))=H^0(\sF \otimes k(x))=\sF \otimes k(x).$$
By Lemma \ref{3.2.10}, there exists a $G$-equivariant locally free sheaf $\sV_1$ which descends to the quotient, and a surjective morphism $\phi:\sV_1\to\sF$. Now, we denote by $\sK_1$ be the kernel of morphism $\phi$ and we consider the following exact sequence:
\[
\xymatrix{
0\ar[r]&\sK_1\ar[r]&\sV_1\ar@^{->>}[r]^{\phi}&\sF\ar[r]&0.
}
\]
We take the long exact cohomology sequence:
\begin{align*}
\ldots &\to \tor_{j-1}(\sK_1,k(x)) \to 0 \to \tor_{j-1}(\sF,k(x))\to  \\
&\to \tor_{j-2}(\sK_1,k(x)) \to 0 \to \tor_{j-2}(\sF,k(x))\to \\
\ldots &\to \tor_{1}(\sK_1,k(x)) \to 0 \to  \tor_1(\sF,k(x))\to \\
&\to \sK_1 \otimes k(x) \to \sV_1\otimes k(x) \to \sF\otimes k(x) \to 0
\end{align*}

The vector space $H^j(\sK_1\otimes k(x))$ is always between two $G_x$-invariant vector spaces, with the maps being $G_x$-equivariant, so Lemma \ref{3.2.11} implies that it is $G_x$-invariant as well. Now we iterate the process; apply again Lemma \ref{3.2.10} to get a $G$-equivariant locally free sheaf $\mathscr{V}_2$ which descends to the quotient, and a surjective morphism $\phi':\mathscr{V}_2\to\mathscr{K}_1$. Let $\mathscr{K}_2$ be the kernel of $\phi'$. As before, we have the exact sequence:
\[
\xymatrix{
0\ar[r]&\sK_2\ar[r]&\sV_2\ar@^{->>}[r]^{\phi'}&\sK_1\ar[r]&0.
}
\]
From the long exact cohomology sequence associated to the sequence above, we deduce that $H^j(\sK_2\otimes k(x))$ is $G_x$-invariant. In particular there exists a $G$-equivariant locally free sheaf $\sV_3$ which descends to the quotient, and a surjective morphism from $\sV_3$ to $\sK_2$. Since the homological dimension of $\coh^G(X)$ is finite, the process must end in a finite number of iterations. We denote by $m$ this number. It means that, at some point we obtain a kernel $\sK_m$ which is a locally free sheaf and such that the action of the stabiliser $G_x$ on the vector spaces $\sK_m\otimes k(x)$ is trivial for every $x\in X$.\ Therefore, by Theorem \ref{3.2.4}, $\sK_m$ descends to $\perf(X/G)$. In other words we obtain a finite $G$-equivariant locally free resolution of $\sF$ of the form:
\[
\xymatrix{
0\ar[r]&\sV_m\ar[r]&\ldots \ar[r]&\sV_2\ar[r]&\sV_1\ar[r]&\sF\ar[r]&0.
}
\]
such that every $\mathscr{V}_i$ descends to the quotient for $i=1, \ldots, m$. \medskip

Assume now that the statement is true for $n-1$.
Consider the following resolution of $\sF^\bullet$:
$$\sE^{\bullet}:=\{0\to\sE^{1}\to \ldots \to\sE^{n}\to 0 \}.$$
By Lemma \ref{3.2.12} there exists a complex $\sV^{\bullet}$ which is quasi-isomorphic to $\sE^\bullet$ and $\sV^{n}$ descends to the quotient. So, we have the following distinguished triangle:
\[
\xymatrix@C=1.4pc{
\sV^n[-n]\otimes k(x)\ar[r]&\sV^{\bullet}\otimes k(x)\ar[r]&\sigma_{\le n-1}\sV^{\bullet}\otimes k(x)\ar[r]&\sV^n[-n]\otimes k(x)[1],
}
\]
where $\sigma_{\le n-1}\sV^{\bullet}$ is the stupid truncation of the complex $\sV^{\bullet}$. Consider the long exact cohomology sequence:
\begin{multline*}
0\to H^{n-1}(\sV^{\bullet}\otimes k(x)) \to H^{n-1}(\sigma_{\le n-1}\sV^{\bullet}\otimes k(x)) \to\\
\to \sV^n[-n]\otimes k(x)\to H^{n}(\sV^{\bullet}\otimes k(x)) \to 0
\end{multline*}
By using Lemma \ref{3.2.11} we have that $H^{n-1}(\sigma_{\le n-1}\sV^{\bullet}\otimes k(x))$ is $G_x$-invariant. Furthermore, 
$H^{j}(\sigma_{\le n-1}\sV^{\bullet}\otimes k(x))$ is isomorphic to $H^{j}(\sV^{\bullet}\otimes k(x))$ for $j=1, \ldots, n-1$. 
It means that $\sigma_{\le n-1}\sV^{\bullet}$ is a complex such that $H^{j}(\sigma_{\le n-1}\sV^{\bullet}\otimes k(x))$ is $G_x$ invariant for all $j$.\ Moreover, $H^{j}(\sigma_{\le n-1}\sV^{\bullet})\neq 0$ for $j=1, \ldots, n-1$. Hence we can apply the induction hypothesis to conclude that $\sigma_{\le n-1}\sV^{\bullet}$ descends to $\perf(X/G)$. We have the following commutative diagram:
\[
\xymatrix{
\sV^{n}[-n]\ar[r]\ar[d]&\sV^{\bullet}\ar[r]\ar[d]&\sigma_{\le n-1}\sV^{\bullet}\ar[d]\\
\Pi^{G,*}\Pi^G_{*}(\sV^{n}[-n])\ar[r]&\Pi^{G,*}\Pi^G_{*}(\sV^{\bullet})\ar[r]&\Pi^{G,*}\Pi^G_{*}(\sigma_{\le n-1}\sV^{\bullet})
}
\]
The first and third vertical map are quasi-isomorphism, by the five-Lemma this implies that the second vertical map is a quasi-isomorphism, it means that $\sV^{\bullet}$ descends to $\perf(X/G)$ and this concludes the proof.
\end{proof}

\section{Application of the criterion} \label{sec:applications}
In the previous section we found a criterion characterising the descending complexes. We can then make Theorem \ref{teoremadellasvolta} by using Theorem \ref{PG} more explicit.
Let $\Phi_{\sE^\bullet}$ be a Fourier-Mukai equivalence from $\cD^G(X)$ to $\cD^H(Y)$. We begin with a lemma which collects together the result of the previous sections.

\begin{lem}\label{PG2}
The functor { }$\Omega$ is an equivalence between $\perf(X/G)$ and $\perf(Y/H)$ if and only if the two following conditions are satisfied for any integer $i$, any $y$ in $Y$, any $x$ in $X$, any $\sA^\bullet$ in $\perf(X/G)$ and any $\sB^\bullet$ in $\perf(Y/H)$.
\begin{itemize}
\item[(i')] The stabiliser $H_y$ acts trivially on $\left[H^i\bigl(\Pi^{*,G} (\sA^\bullet) \otimes \sE^\bullet|_{X \times \{y\}} \bigr)\right]^G$.

\item[(ii')] The stabiliser $G_x$ acts trivially on $\left[H^i\bigl(\Pi^{*,H} (\sB^\bullet) \otimes \sE_L^\bullet|_{\{x\} \times Y} \bigr)\right]^H$.

\end{itemize}
Here $\sA^\bullet$ and $\sB^\bullet$ are interpreted as complexes on $X \times \{y\}$ and $\{x\} \times Y$, respectively.
\end{lem}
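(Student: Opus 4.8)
The plan is to feed the two descent statements of Theorem~\ref{teoremadellasvolta} into the pointwise criterion of Theorem~\ref{PG} and then rewrite the resulting hypercohomology conditions in terms of cohomology sheaves. I will establish the equivalence of condition (i) of Theorem~\ref{teoremadellasvolta} with (i'); the equivalence of (ii) with (ii') is obtained by the same argument after exchanging $(X,G)$ with $(Y,H)$ and using that $(\Phi_{\sE^\bullet})^{-1}=\Phi_{\sE_L^\bullet}$, so that the kernel $\sE^\bullet$ is replaced by $\sE_L^\bullet$.

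Fix $\sA^\bullet\in\perf(X/G)$ and $y\in Y$, write $\sG^\bullet:=\Phi_{\sE^\bullet}\circ\Pi^{G,*}(\sA^\bullet)\in\cD^H(Y)$ and $\sC^\bullet:=\Pi^{G,*}(\sA^\bullet)\otimes\sE^\bullet|_{X\times\{y\}}$, viewed as a $G\times H_y$-equivariant complex on $X\times\{y\}\cong X$. Since $G$ is the kernel of $G\times H\to H$ it acts trivially on $Y$, so $[\cdot]^G$ commutes with $\otimes k(y)$ and is exact (the base field has characteristic zero and $G$ is finite). Base change in the fibre square of the projection $q_Y\colon X\times Y\to Y$ over $y$, valid because $q_Y$ is flat, then yields a natural isomorphism of complexes of $H_y$-representations
$$\sG^\bullet\otimes k(y)\;\simeq\;\big[R\Gamma(X,\sC^\bullet)\big]^G.$$
Passing to cohomology and invoking Theorem~\ref{PG}, condition (i) becomes: $H_y$ acts trivially on $\big[\mathbb{H}^{j}(X,\sC^\bullet)\big]^G$ for every $j$, every $y$ and every $\sA^\bullet$. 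It remains to compare this with the cohomology sheaves $\sH^q:=H^q(\sC^\bullet)$ occurring in (i').

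For the implication (i') $\Rightarrow$ (i) I would run the $H_y$-equivariant hypercohomology spectral sequence $E_2^{p,q}=H^p(X,\sH^q)\Rightarrow\mathbb{H}^{p+q}(X,\sC^\bullet)$ and apply the exact functor $[\cdot]^G$, obtaining a spectral sequence of $H_y$-representations with $[E_2^{p,q}]^G\simeq H^p\big(X/G,[\sH^q]^G\big)$. If $H_y$ acts trivially on each sheaf $[\sH^q]^G$ then it acts trivially on all these terms, hence on the graded pieces of $\big[\mathbb{H}^{p+q}\big]^G$; since finite-group representations in characteristic zero are semisimple, trivial action on the graded pieces forces trivial action on $\big[\mathbb{H}^{p+q}\big]^G$ itself.

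The converse is the crux of the proof, since a priori taking global (hyper)cohomology over the projective variety $X$ loses the local information recorded by the sheaves $[\sH^q]^G$. Here I would use that (i) is required for \emph{all} $\sA^\bullet\in\perf(X/G)$: replacing $\sA^\bullet$ by $\sA^\bullet\otimes\sL$ with $\sL$ a line bundle on $X/G$ tensors each $\sH^q$ by $\pi^*\sL$, which carries the trivial $H_y$-linearisation and so leaves the $H_y$-action unchanged. Choosing $\sL$ sufficiently ample and using Serre vanishing---$\pi$ is finite, hence $\pi^*\sL$ is ample on $X$---collapses the spectral sequence onto the row $p=0$, giving an isomorphism $\big[\mathbb{H}^{q}(X,\sC^\bullet\otimes\pi^*\sL)\big]^G\simeq H^0\big(X/G,[\sH^q]^G\otimes\sL\big)$ of $H_y$-representations. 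For $\sL$ ample enough the sheaf $[\sH^q]^G\otimes\sL$ is globally generated, so the evaluation map $H^0\otimes\cO_{X/G}\to[\sH^q]^G\otimes\sL$ is a surjection of $H_y$-equivariant sheaves; were $H_y$ to act trivially on the global sections it would act trivially on the quotient, and therefore on $[\sH^q]^G$. Contrapositively, a failure of (i') for some $\sA^\bullet$ produces, after such a twist, a failure of (i). This gives (i) $\Rightarrow$ (i') and, together with the previous paragraph, the desired equivalence; the main obstacle is precisely this passage from hypercohomology to cohomology sheaves, which is what forces us to let the test object $\sA^\bullet$ range over all of $\perf(X/G)$ and to exploit positivity on the quotient.
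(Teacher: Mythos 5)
Your first two paragraphs are exactly the paper's proof: combine Theorem~\ref{teoremadellasvolta} with Theorem~\ref{PG}, then use flat base change along $X\times\{y\}\hookrightarrow X\times Y$ and the exactness of $[\cdot]^G$ in characteristic zero to identify $\Phi_{\sE^\bullet}\circ\Pi^{G,*}(\sA^\bullet)\otimes k(y)$ with $\bigl[\Gamma\bigl(X\times\{y\},\Pi^{*,G}(\sA^\bullet)\otimes\sE^\bullet|_{X\times\{y\}}\bigr)\bigr]^G$ and split this complex of vector spaces into its cohomologies. The paper stops there, because in condition (i') the symbol $H^i(\cdots)$ denotes precisely the $i$-th cohomology of that complex of derived global sections --- the $\Gamma$ is suppressed in the notation --- so the condition you reach, namely that $H_y$ acts trivially on the $G$-invariants of the $i$-th hypercohomology, already \emph{is} (i'). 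Your last two paragraphs instead read $H^i$ as the cohomology \emph{sheaves} of the restricted complex and prove that, once one quantifies over all $\sA^\bullet\in\perf(X/G)$, this sheaf-level condition is equivalent to the hypercohomology one: the equivariant spectral sequence plus semisimplicity of $H_y$-representations in characteristic zero gives one implication, and twisting $\sA^\bullet$ by ample line bundles on $X/G$ (which preserves $\perf(X/G)$ and tensors each cohomology sheaf by a pullback carrying the trivial $H_y$-linearisation), Serre vanishing and global generation give the converse. That extra step is essentially sound --- the only point needing care is that trivial action on all fibres implies trivial action on the coherent sheaf itself, which follows from the isotypic decomposition and Nakayama --- and it yields a genuinely finer, pointwise reformulation of (i'). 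But it is a detour forced by a different reading of the notation rather than a logically necessary part of the argument: the lemma as the paper intends it is already established by the first half of your proposal.
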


\begin{proof}
We want to apply Theorem \ref{PG} to the two conditions of Theorem \ref{teoremadellasvolta}. We will deal here only with Condition (i'), which comes from Condition (i); the second condition works in the same way. 

By Theorem \ref{PG}, condition (i) holds if and only if the stabiliser $H_y$ acts trivially on the $\sO_Y$-module $H^j(\Phi_{\mathscr{E}^{\bullet}}\circ \Pi^{G,*}(\sA^\bullet) \otimes k(y))$ for every integer $j$, for every $\sA^\bullet\in \perf(X/G)$ and $y \in Y$. We now want to rephrase this condition in order to highlight the role of the kernel $\sE^{\bullet}$. We can write it explicitly as
\begin{equation*} 
H_y \text{ acts trivially on } H^j\left[  q^G_{Y,*}\bigl(q_X^{*} \circ \Pi_X^{*,G} (\sA^\bullet) \otimes \sE^{\bullet} \bigr)    \otimes k(y)     \right],
\end{equation*}
again for every point $y$ of $Y$, every $\sA^{\bullet}\in \perf(X/G)$ and every integer $j$. Recall that $q^G_{Y,*}$ denotes the equivariant pushforward with respect to the kernel $G$ of the projection $G \times H \to H$. Let us focus just on the expression inside the cohomology, writing the equivariant part explicitly:
\begin{equation*} 
\left[  q_{Y,*}\bigl(q_X^{*} \circ \Pi_X^{*,G} (\sA^\bullet) \otimes \sE^{\bullet} \bigr)    \otimes k(y)     \right]^G.
\end{equation*}

The inclusion of $y$ in $Y$ defines a natural embedding $\iota$ of $X \times \{y\}$ in $X \times Y$. We have the following commutative diagram, where by an abuse of notation we denote the parallel arrows by the same name.

$$\xymatrix{
X \times \{y\}\ar@{^{(}->}[r]^{\iota_y}\ar[d]^{q_Y} & X \times Y\ar[d]^{q_Y}\\
\{y\}\ar@{^{(}->}[r]^{\iota_y} & Y
}$$

The group $G$ acts trivially on $Y$, the map $\iota_y$ can be used to bring $k(y)$ inside the map $[-]^G$, see \cite[Section 2.2]{Krug}. Then, by base change, inside the pushforward $q_{Y,*}$ as well:
\begin{equation*} 
\left[q_{Y,*} \circ \iota_y \bigl((q^*_X \circ \Pi^{*,G} (\sA^\bullet))\otimes \sE^\bullet\bigr)\right]^G.
\end{equation*}
The map $\iota_y$ is defined fibrewise and it is just the restriction to $X \times \{y\}$, so we get
\begin{equation*} 
\left[q_{Y,*} \bigl((q^*_X \circ \Pi^{*,G} (\sA^\bullet))|_{X \times \{y\}}\otimes \sE^\bullet|_{X \times \{y\}} \bigr)\right]^G.
\end{equation*}
Notice that $\iota_y$ and $q_X$ are inverse to each other, after the identification of $X$ with $X \times \{y\}$, so we have
\begin{equation*} 
\left[q_{Y,*} \bigl(\Pi^{*,G} (\sA^\bullet) \otimes \sE^\bullet|_{X \times \{y\}} \bigr)\right]^G,
\end{equation*}
where we remark that we are considering the complex $\Pi^{*,G} (\sA^\bullet)$ as a complex on $X \times \{y\}$.

The map $q_{Y,*}$ is the push forward to the point $y$, so it correspond to taking global sections
\begin{equation*} 
\left[ \Gamma\bigl(X \times \{y\}, \Pi^{*,G} (\sA^\bullet) \otimes \sE^\bullet|_{X \times \{y\}} \bigr)\right]^G,
\end{equation*}
Every complex of vector spaces splits, so we can write
\begin{equation*} 
\left[ \bigoplus_{j \in \bZ} H^j\bigl(\Pi^{*,G} (\sA^\bullet) \otimes \sE^\bullet|_{X \times \{y\}} \bigr)[-j]\right]^G.
\end{equation*}
Finally, notice that the direct sum and the shift commutes with taking $G$ invariants. Writing the whole condition as in the beginning we get
\begin{equation*} 
 H_y \text{ acts trivially on } H^i\left(\bigoplus_{j \in \bZ} [H^j\bigl(\Pi^{*,G} (\sA^\bullet) \otimes \sE^\bullet|_{X \times \{y\}} \bigr)]^G[-j]\right).
\end{equation*}

By computing explicitly the outer cohomology we arrive to the final result.
\end{proof}

We take a moment to observe that at the level of linearisations we get only the contribution coming from the kernel $\sE^\bullet$, since $\sA^\bullet$ and $\sB^\bullet$ are endowed with the trivial linearisation. Moreover, notice that the action of $H_y$ on $\left[H^i\bigl(\Pi^{*,G} (\sA^\bullet) \bigr)\right]^G$ is trivial, however we can not remove $\sA^\bullet$ from the condition due to the fact that the cohomology and the tensor product does not commute. This is summarised in the following result.

\begin{cor} \label{cornec}
The functor $\Omega$ is an equivalence between $\perf(X/G)$ and $\perf(Y/H)$ only if the two following conditions are satisfied for any integer $i$, $x$ in $X$ and $y$ in $Y$. 
\begin{itemize}
\item[(i'-nec)] The stabiliser $H_y$ acts trivially on $\left[H^i\bigl(\sE^\bullet|_{X \times \{y\}} \bigr)\right]^G$.

\item[(ii'-nec)] The stabiliser $G_x$ acts trivially on $\left[H^i\bigl(\sE_L^\bullet|_{\{x\} \times Y} \bigr)\right]^H$.
\end{itemize}
\end{cor}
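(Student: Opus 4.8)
The plan is to obtain this necessary condition by specializing the necessary-and-sufficient criterion of Lemma \ref{PG2} to a single, well-chosen pair of test objects. Since $\Omega$ is assumed to be an equivalence, Lemma \ref{PG2} guarantees that condition (i') holds for \emph{every} $\sA^\bullet \in \perf(X/G)$ and condition (ii') for \emph{every} $\sB^\bullet \in \perf(Y/H)$. The strategy is therefore to feed into these conditions the most economical perfect complexes available, so that the tensor factor involving $\sA^\bullet$ (resp.\ $\sB^\bullet$) disappears.

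First I would take $\sA^\bullet = \sO_{X/G}$ and $\sB^\bullet = \sO_{Y/H}$, which lie in $\perf(X/G)$ and $\perf(Y/H)$ respectively, being locally free sheaves of rank one concentrated in degree zero. The key computation is the identification $\Pi^{*,G}(\sO_{X/G}) \cong \sO_X$ with its canonical $G$-linearisation: the functor $\triv$ equips $\sO_{X/G}$ with the trivial linearisation for the trivial $G$-action on $X/G$, and the equivariant pullback along the $G$-invariant projection $\pi\colon X \to X/G$ returns $\sO_X$ with the canonical linearisation coming from pushforward of regular functions. Since $\sO_X$ with this linearisation is the unit of the tensor product in $\coh^G(X)$, after identifying $X \times \{y\}$ with $X$ one gets a $G$-equivariant isomorphism
\[
\Pi^{*,G}(\sO_{X/G}) \otimes \sE^\bullet|_{X \times \{y\}} \;\cong\; \sE^\bullet|_{X \times \{y\}}.
\]
Condition (i') then collapses to the requirement that $H_y$ act trivially on $\left[H^i\bigl(\sE^\bullet|_{X \times \{y\}}\bigr)\right]^G$, which is exactly (i'-nec). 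Running the symmetric argument with $\sB^\bullet = \sO_{Y/H}$ and the left-adjoint kernel $\sE_L^\bullet$ yields (ii'-nec).

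I do not anticipate a serious obstacle: the corollary is only a necessary condition and is obtained simply by restricting the universal quantifier of Lemma \ref{PG2} to a single object. The one point demanding care is the verification that $\Pi^{*,G}(\sO_{X/G}) \cong \sO_X$ and that tensoring against the canonically linearised $\sO_X$ acts as the identity on $\cD^G(X)$; this is precisely the mechanism flagged in the remark preceding the statement, where it is noted that $\sA^\bullet$ cannot be dropped in general because cohomology does not commute with tensor products, yet can be rendered harmless by taking $\sA^\bullet$ to be the structure sheaf so that the tensor factor becomes the unit.
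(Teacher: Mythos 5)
Your proposal is correct and follows exactly the paper's argument: the corollary is deduced by specializing Conditions (i') and (ii') of Lemma \ref{PG2} to $\sA^\bullet = \sO_{X/G}$ and $\sB^\bullet = \sO_{Y/H}$, whereupon the tensor factor becomes the unit and drops out. The only difference is that you spell out the identification $\Pi^{*,G}(\sO_{X/G}) \cong \sO_X$ with its canonical linearisation, which the paper leaves implicit.
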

\begin{proof}
This comes immediately by chosing as $\sA^\bullet$ the complex $\sO_{X/G}$ and as $\sB^\bullet$ the complex $\sO_{Y/H}$. Notice that there is no hope in general for these conditions to be sufficient as well: the kernel $\sE^\bullet$ may have everywhere vanishing cohomologies, so (i'-nec) and (ii'-nec) are empty, but (i') and (ii') are not.
\end{proof}

We can use this corollary to deal with the case proposed in Section \ref{sec:positivenegativeexamples}, for the kernel $\sE^\bullet$ being $\Delta_*^{\bZ_2}\sO(1)$. In (i'-nec), the only non trivial cases to check correspond to $i=0$ and $y=(x_0:x_1:0)$ or $(0:0:1)$. By explicit calculations (or by directly applying Theorem \ref{3.2.4}) we see that (i'-nec) is not satisfied.
We conclude this section with the following theorem, which gives a condition that should be feasible to check in concrete cases, provided that a generator of the categories $\perf(X/G)$ and $\perf(Y/H)$ is known. 

\begin{thm} \label{thm:final2}
Conditions (i') and (ii') of Lemma \ref{PG2} can be checked just for a generator $\sA^\bullet$ of $\perf(X/G)$ and a generator $\sB^\bullet$ of $\perf(Y/H)$, respectively.
\end{thm}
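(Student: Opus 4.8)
The plan is to fix one of the two conditions, say (i'), and to show that the collection of objects for which it holds is a thick triangulated subcategory of $\perf(X/G)$; the statement for (ii') is identical after exchanging the roles of $(X,G)$ and $(Y,H)$ and replacing $\sE^\bullet$ by $\sE_L^\bullet$. Concretely, let $\mathcal{S}\subseteq\perf(X/G)$ be the full subcategory of those $\sA^\bullet$ such that $H_y$ acts trivially on $\left[H^i\bigl(\Pi^{*,G}(\sA^\bullet)\otimes\sE^\bullet|_{X\times\{y\}}\bigr)\right]^G$ for every integer $i$ and every $y\in Y$. One implication is trivial: if (i') holds for all objects it holds for a generator. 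For the converse I would prove that $\mathcal{S}$ is a thick triangulated subcategory; since it contains a generator $\sA^\bullet$, and by definition $\perf(X/G)$ is the smallest thick triangulated subcategory containing $\sA^\bullet$, this forces $\mathcal{S}=\perf(X/G)$, which is exactly (i') for every object.

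The key observation is that, for each fixed $y$, the assignment
$$T_y(\sA^\bullet):=R\Gamma\bigl(X\times\{y\},\ \Pi^{*,G}(\sA^\bullet)\otimes\sE^\bullet|_{X\times\{y\}}\bigr)$$
is an exact functor of triangulated categories, being the composition of the exact functors $\Pi^{*,G}$, derived tensor with the fixed object $\sE^\bullet|_{X\times\{y\}}$, and derived global sections. Its target is naturally a bounded complex of $G\times H_y$-representations, because $G\times H_y$ is precisely the subgroup of $G\times H$ preserving $X\times\{y\}$ while $\sA^\bullet$ carries the trivial linearisation. As shown in the proof of Lemma \ref{PG2}, the cohomology appearing in (i') is exactly $H^iT_y(\sA^\bullet)$, so each $H^i\circ T_y$ is a cohomological functor. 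Hence a distinguished triangle $\sA_1^\bullet\to\sA_2^\bullet\to\sA_3^\bullet\to\sA_1^\bullet[1]$ yields a long exact sequence of $G\times H_y$-representations. Since $k$ has characteristic zero and $G$ is finite, the invariants functor $[\,\cdot\,]^G$ is exact (it is projection onto the trivial isotypic component), so applying it produces a long exact sequence of $H_y$-representations relating the $\left[H^iT_y(\sA_j^\bullet)\right]^G$.

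The three closure properties then follow. Closure under shifts is immediate, as shifting $\sA^\bullet$ only reindexes $i$. Closure under direct summands follows from additivity of $T_y$, $H^i$ and $[\,\cdot\,]^G$, together with the elementary fact that $H_y$ acts trivially on a direct sum if and only if it acts trivially on each summand. For closure under cones it suffices to prove the two-out-of-three property: if two of $\sA_1^\bullet,\sA_2^\bullet,\sA_3^\bullet$ lie in $\mathcal{S}$, so does the third. Reading off any three consecutive terms of the $[\,\cdot\,]^G$-invariant long exact sequence gives a three-term exact sequence of $H_y$-representations with equivariant maps, to which Lemma \ref{3.2.11} applies: whenever $H_y$ acts trivially on the two outer terms it acts trivially on the middle one. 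The three possible choices of consecutive triple, namely the segments centred at $H^iT_y(\sA_2^\bullet)$, at $H^iT_y(\sA_3^\bullet)$, and at $H^{i+1}T_y(\sA_1^\bullet)$, cover all three cases of two-out-of-three, so $\mathcal{S}$ is triangulated; with closure under summands it is thick, and the conclusion follows.

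The point requiring the most care is the bookkeeping of the two commuting actions: one must verify that $G$ and $H_y$ act on $H^iT_y(\sA^\bullet)$ through the product $G\times H_y$, so that $[\,\cdot\,]^G$ yields a genuine $H_y$-representation and its exactness can be invoked compatibly with the residual $H_y$-action. The only other subtlety is to match each of the three instances of the two-out-of-three property with the correct consecutive triple $V_1\to V_2\to V_3$ of the long exact sequence, so that Lemma \ref{3.2.11} really applies in each case; this replaces the usual rotation of the distinguished triangle.
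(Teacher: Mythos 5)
Your proposal is correct and follows essentially the same route as the paper's proof: reduce to stability of condition (i') under the operations generating $\perf(X/G)$ from a single generator, and handle cones by applying Lemma \ref{3.2.11} to consecutive terms of the long exact sequence of $G$-invariants of the cohomologies of $\Pi^{*,G}(-)\otimes\sE^\bullet|_{X\times\{y\}}$. You are in fact somewhat more careful than the paper, which only treats shifts and cones: your explicit verification of closure under direct summands (thickness) is genuinely needed, since a generator in the sense of \cite[Theorem 4]{Orl09} generates only the smallest \emph{thick} triangulated subcategory, and your explicit appeal to the exactness of $[\,\cdot\,]^G$ in characteristic zero cleanly justifies the exact sequences the paper writes down without comment.
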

\begin{proof}
Notice first that both $\perf(X/G)$ and $\perf(Y/H)$ can be generated by just one object, as proved in \cite[Theorem 4]{Orl09}. Let us focus on Condition (i'). We want to show that $(i')$ is stable under shifts and cones. The first property is guaranteed by the following equality
$$H^i\bigl(\Pi^{*,G} (\sA^\bullet[k]) \otimes \sE^\bullet|_{X \times \{y\}} \bigr) = H^{i+k}\bigl(\Pi^{*,G} (\sA^\bullet) \otimes \sE^\bullet|_{X \times \{y\}} \bigr).$$
For the second one, assume that $\sA^\bullet_1$ and $\sA^\bullet_2$ satisfy (i') for every $i$ and $y$. Let $\sC^\bullet$ be the cone of any morphism $\sA^\bullet_1 \to \sA^\bullet_2$. Our goal is to prove that $(i')$ holds for $\sC^\bullet$. The maps in the distinguished triangle associated to $\sC^\bullet$ induce maps between vector spaces
\begin{multline*}
    \left[H^i\bigl(\Pi^{*,G} (\sA^\bullet_1) \otimes \sE^\bullet|_{X \times \{y\}} \bigr)\right]^G \to \left[H^i\bigl(\Pi^{*,G} (\sC^\bullet) \otimes \sE^\bullet|_{X \times \{y\}} \bigr)\right]^G \to \\
    \to \left[H^i\bigl(\Pi^{*,G} (\sA^\bullet_2) \otimes \sE^\bullet|_{X \times \{y\}} \bigr)\right]^G.
\end{multline*}
These maps are $H$-equivariant, since they come from maps in $\perf(X/G)$ and from the functor $\triv$, hence we can apply Lemma \ref{3.2.11}, which guarantees that $H_y$ acts trivially in the middle term.
\end{proof}

We remark that the kernel $\sE^{\bullet}_L \cong \sE^{\bullet}_R$ of the inverse of $\Phi_{\sE^\bullet}$ has an explicit description as in Section \ref{sec:background}. This easily allows to write a more explicit version of conditions (ii), (ii'), (ii'-nec) by using similar computations as in Lemma \ref{PG2}. However, we still need to verify two statements to prove that $\Omega$ is an equivalence, one concerning the source category $\perf(X/G)$ and one the target $\perf(Y/H)$.

\section*{Acknowledgements}
The first idea of the paper was developed in the Ph. D. thesis of the first author, \cite{pigi}.
R.M. is supported by the Department of Mathematics and Natural Sciences of the University of Stavanger in the framework of the grant 230986 of the Research Council of Norway, by Firb 2012 Moduli spaces and applications granted by Miur and by MIUR: Dipartimenti di Eccellenza Program (2018-2022) - Dept. of Math. Univ. of Pavia.
We thank Paolo Stellari for many useful suggestions and for his comments that made us improve this short article. We are very grateful to Andreas Hochenegger and David Ploog for their valuable comments on a preliminary version of this paper. We finally thank an anonymous referee for pointing out a mistake in the first definition of the functor $\Omega$, which helped us correcting and improving the main result of the paper.

\bibliographystyle{alpha}
\bibliography{bibliografia}

\begin{thebibliography}{Amo15}

\bibitem[Amo15]{pigi}
F.~Amodeo.
\newblock Fourier-mukai transforms for singular projective varieties.
\newblock {\em Ph. D. Thesis}, 2015.

\bibitem[BL06]{BeLu}
J.~Bernstein and V.~Lunts.
\newblock {\em Equivariant sheaves and functors}.
\newblock Springer, 2006.

\bibitem[CS12]{CaSeSurveyFM}
A.~Canonaco and P.~Stellari.
\newblock Fourier-{M}ukai functors: a survey.
\newblock {\em EMS Ser. Congr. Rep.}, pages 27--60, 2012.

\bibitem[Del04]{delorme}
C~Delorme.
\newblock Espace projectifs antisotropes.
\newblock {\em Bulletin de la S. M. F.}, 103:203--223, 2004.

\bibitem[DN89]{drena}
J.-M. Drezet and M.~S. Narasimhan.
\newblock Groupe de picard des vari{\'e}t{\'e}s de modules de fibr{\'e}s
  semi-stables sur les courbes alg{\'e}briques.
\newblock {\em Inventiones mathematicae}, 97(1):53--94, 1989.

\bibitem[Huy06]{HuyFM}
D.~Huybrechts.
\newblock {\em Fourier-{M}ukai transforms in algebraic geometry}.
\newblock Oxford Mathematical Monographs. The Clarendon Press, Oxford
  University Press, Oxford, 2006.

\bibitem[Isa08]{isaacs}
I.~M. Isaacs.
\newblock {\em Finite group theory}, volume~92 of {\em Graduate Studies in
  Mathematics}.
\newblock American Mathematical Society, Providence, RI, 2008.

\bibitem[Kaw04]{Kaw}
Y.~Kawamata.
\newblock Equivalences of derived catgories of sheaves on smooth stacks.
\newblock {\em American journal of mathematics}, pages 1057--1083, 2004.

\bibitem[KPS18]{KPS}
A.~Krug, D.~Ploog, and P.~Sosna.
\newblock Derived categories of resolutions of cyclic quotient singularities.
\newblock {\em Q. J. Math.}, 69(2):509--548, 2018.

\bibitem[Kru18]{Krug}
Andreas Krug.
\newblock Remarks on the derived {M}c{K}ay correspondence for {H}ilbert schemes
  of points and tautological bundles.
\newblock {\em Math. Ann.}, 371(1-2):461--486, 2018.

\bibitem[KS15]{KS}
A.~Krug and P.~Sosna.
\newblock Equivalences of equivariant derived categories.
\newblock {\em J. Lond. Math. Soc. (2)}, 92(1):19--40, 2015.

\bibitem[Nev08]{nev}
T.~Nevins.
\newblock Descent of coherent sheaves and complexes to geometric invariant
  theory quotients.
\newblock {\em Journal of Algebra}, 320(6):2481--2495, 2008.

\bibitem[Orl09]{Orl09}
Dmitri Orlov.
\newblock Remarks on generators and dimensions of triangulated categories.
\newblock {\em Mosc. Math. J.}, 9(1):153--159, back matter, 2009.

\bibitem[Plo05]{PloogThesis}
D.~Ploog.
\newblock Groups of autoequivalences of derived categories of smooth projective
  varieties.
\newblock {\em Ph. D. Thesis}, 2005.

\bibitem[Plo07]{Ploog}
D.~Ploog.
\newblock Equivariant autoequivalences for finite group actions.
\newblock {\em Adv. Math.}, 216(1):62--74, 2007.

\bibitem[T{\'e}r02]{terArx}
S.~T{\'e}rouanne.
\newblock Sur la cat{\'e}gorie ${D}^g(x)$ pour l'action d'un groupe fini avec
  quotient lisse.
\newblock {\em ArXiv Mathematics e-prints}, math/0206144, 2002.

\bibitem[T{\'e}r03]{ter}
S.~T{\'e}rouanne.
\newblock Sur la cat\'egorie ${D}^b_g(x)$ pour $g$ r\'eductif fini.
\newblock {\em C. R. Math. Acad. Sci. Paris}, 336:483--486, 2003.

\end{thebibliography}

\end{document}